\documentclass[12pt]{amsart}
\usepackage{a4}
\usepackage{latexsym}
\usepackage{amsmath}
\usepackage{amssymb}
\usepackage{amscd}
\usepackage{color}
\usepackage{amsthm}
\usepackage{hyperref}

\theoremstyle{plain}
    \newtheorem{theorem}                    {Theorem}[section]  
    \newtheorem{lemma}      [theorem]       {Lemma}
    \newtheorem{corollary}  [theorem]       {Corollary}
    \newtheorem{proposition}[theorem]       {Proposition}

    \newtheorem{question}       {Question}

\newtheorem{example}[theorem]{Example}

\newtheorem{remark}[theorem]{Remark}

\DeclareFontFamily{U}{wncy}{}
    \DeclareFontShape{U}{wncy}{m}{n}{<->wncyr10}{}
    \DeclareSymbolFont{mcy}{U}{wncy}{m}{n}
    \DeclareMathSymbol{\Sh}{\mathord}{mcy}{"58} 

\newcommand{\Gr}{\operatorname{Gr}}

\newcommand{\Hom}{\operatorname{Hom}}

\newcommand{\Ext}{\operatorname{Ext}}

\newcommand{\Pic}{\operatorname{Pic}}

\newcommand{\Hilb}{{\operatorname{Hilb}}}

\newcommand{\Spec}{\operatorname{Spec}}
\newcommand{\trdeg}{\operatorname{trdeg}}

\newcommand{\id}{\operatorname{id}}
\newcommand{\Br}{\operatorname{Br}}
\newcommand{\CH}{\operatorname{CH}}
\renewcommand{\lim}{\operatornamewithlimits{lim}}
\newcommand{\colim}{\operatornamewithlimits{colim}}

\newcommand{\NS}{\operatorname{NS}}
\newcommand{\f}{{\mathcal F}}

\newcommand{\Z}{{{\mathbb Z}}}

\newcommand{\Q}{{{\mathbb Q}}}

\renewcommand{\P}{{{\mathbb P}}}
\newcommand{\F}{{{\mathbb F}}}

\newcommand{\G}{{{\mathbb G}}}

\newcommand{\K}{{\mathbb K}}

\newcommand{\M}{{\mathcal M}}
\newcommand{\A}{{\mathbb A}}
\newcommand{\et}{{\text{\rm et}}}

\newcommand{\Zar}{{\text{\rm Zar}}}

\newcommand{\fiber}{\operatorname{fiber}}

\newcommand{\proofend}{\hfill$\square$\\ \smallskip}
\newcommand{\GG}{{\mathcal G}(p^\infty)}

\date{\today}
\title{On Frobenius Rigidity for Motivic Cohomology}
\author{Thomas H. Geisser}
\address{Department of Mathematics, Rikkyo University, Ikebukuro, Tokyo, Japan}
\email{geisser@rikkyo.ac.jp}
\thanks{Supported by JSPS Grant-in-Aid (B) 23K25768\\
\href{https://orcid.org/0000-0002-1153-4097}
{ORCID: 0000-0002-1153-4097}
}
\subjclass[2010]{Primary:\ 14G15; Secondary:\ 11G10,\ 11G25,\ \ 14F22,\ 14K15}
\keywords{Motivic cohomology, Weil-\'etale cohomology, Frobenius morphism, rigidity, algebraically closed fields}

\begin{document}

\begin{abstract}
We study how motivic and \'etale motivic cohomology of a smooth and
proper variety $X$ changes under extensions of
algebraically closed base fields $k$. We show that with finite coefficients,  
they are independent of $k$ away from the characteristic.
At the characteristic, \'etale cohomology does depend 
on $k$, whereas for motivic cohomology this is only known 
in weights $0,1,\dim X$. 

We then consider the fiber of Frobenius on motivic cohomology
and \'etale motivic cohomology for varieties defined over finite fields, 
i.e., Weil-\'etale cohomology. 
Combining the results of the first part with the structure
theory of perfect unipotent group schemes, we show that this fiber is
independent of $k$ with finite coefficients. Finally, we give some results
and examples with integral coefficients.
\end{abstract}

\maketitle


\section{Introduction}
The aim of this article is to study the rigidity of motivic and
\'etale cohomology, i.e., the question
whether these theories change under an extension of algebraically closed
fields. Asked in this naive form, one can see, for example, by looking
at the units that they
depend on the algebraically closed field. But we consider two more refined
versions of this question. The
first version concerns motivic and \'etale cohomology with finite
coefficients.

\begin{question}
Let $X/k$ be a  smooth and proper variety over an algebraically
closed field $k$ and $\K$ an algebraically closed extension field. Are the groups 
$$H^i_\M(X_\K,\Z/m(n)), \quad H^i_\et(X_\K,\Z/m(n))$$
independent of $\K$, and are they finite?
\end{question}

The smooth and proper base change theorem imply that, for $m$ invertible, 
the \'etale motivic cohomology groups $H^i_\et(X_\K,\Z/m(n))$ 
are independent of $\K$ 
(by which we always mean that the canonical base change map induces an 
isomorphism) and finite, while the motivic cohomology 
groups $H^i_\M(X_\K,\Z/m(n))$ are independent of $\K$, but generally not
finite, at least in characteristic $0$. In contrast, for $p$ the
exponential characteristic of $k$, the \'etale motivic cohomology groups
$H^i_\et(X_\K,\Z/p(n))$ do depend on $\K$ (except for $n=0,\dim X$); 
the motivic cohomology groups $H^i_\M(X_\K,\Z/p(n))$ are independent of 
$\K$ in weight $n=0,1, \dim X$, but the question remains open for other 
weights.

With integral coefficients, independence of $\K$ fails
(for example, $H^1_\M(\K,\Z(1))\cong \K^\times$),
but one can ask if, for a variety
defined over a finite field, the fiber of Frobenius on cohomology is
independent of the algebraically closed field.
This question is motivated by applications to
Weil-\'etale cohomology, and has been raised by several people before. 
More precisely, for a smooth and proper $X/\F_q$ and 
an algebraically closed field $\F_q\subset \K$,  we define
\begin{align*}
H^i_W(X_\K,\Z(n)) &= H^i\Big( \fiber: 
R\Gamma_\et(X_\K,\Z(n))\stackrel{F-1}{\longrightarrow}
R\Gamma_\et(X_\K,\Z(n))\Big)\\
H^i_F(X_\K,\Z(n)) &= H^i\Big( \fiber: 
R\Gamma_\M(X_\K,\Z(n))\stackrel{F-1}{\longrightarrow}
R\Gamma_\M(X_\K,\Z(n))\Big),
\end{align*}
where $F$ acts on $\K$.
The former, Weil-\'etale cohomology, was introduced by 
Lichtenbaum \cite{lichtenbaum}
with the goal of giving a cohomological
interpretation of special values of zeta functions. The theory was further
developed in \cite{Geisser04}. The latter groups were studied in \cite{ichfrob}.
Independence of the algebraically closed field would provide 
additional flexibility, 
for example, there could be better geometric results available over
larger fields. 
Our main result is the following theorem:

\begin{theorem}
For all $i,n\geq 0$ and $m\geq 1$, 
the groups $H^i_W(X_\K,\Z/m(n))$ are finite and do not depend on $\K$.
\end{theorem}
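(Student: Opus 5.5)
Since $\Z/m(n)$ is a complex of sheaves, the first step is to split $m$ into its prime-to-$p$ part and its $p$-power part, where $p$ is the characteristic of $\F_q$. By the Chinese remainder theorem $H^i_W(X_\K,\Z/m(n))$ is the direct sum of the two contributions. Within each part, a d\'evissage along $0\to\Z/\ell^{r-1}\to\Z/\ell^r\to\Z/\ell\to 0$ and the five lemma reduce the question to prime coefficients. This works because finiteness and base-change isomorphisms are stable under extensions of finite groups via the long exact sequences.

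\textbf{Prime-to-$p$ part.} Here the smooth and proper base change theorem gives canonical isomorphisms
$$H^j_\et(X_{\bar\F_q},\mu_m^{\otimes n})\cong H^j_\et(X_\K,\mu_m^{\otimes n}),$$
and these groups are finite. The isomorphisms are compatible with $F$, since $F$ is induced by the Frobenius of the base and the base-change map is natural. The fiber of $F-1$ then sits in a long exact sequence
$$0\to \operatorname{coker}(F-1\mid H^{i-1})\to H^i_W\to \ker(F-1\mid H^i)\to 0.$$
A map of these sequences, induced by $\bar\F_q\subset\K$, together with the five lemma gives independence of $\K$ and finiteness.

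\textbf{$p$-part.} This is the main obstacle, because $H^j_\et(X_\K,\Z/p(n))=H^{j-n}(X_\K,\nu(n))$, computed by logarithmic de Rham--Witt sheaves, genuinely depends on $\K$. I will use the structure theory of perfect unipotent group schemes, as the abstract suggests: by Illusie--Raynaud and Milne, the groups $H^j(X_\K,\nu_r(n))$ are the $\K$-points of a commutative perfect group scheme $G^j$ over $\F_q$. Each $G^j$ is an extension of a finite \'etale group $\pi_0$ by a connected perfect unipotent group $U$, and $F$ acts as the geometric Frobenius of this $\F_q$-structure. On $U(\K)$ the map $F-1$ is the Lang isogeny, which is surjective on $\K$-points with kernel $U(\F_q)$, finite and independent of $\K$. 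On the \'etale part $\pi_0(\K)=\pi_0(\bar\F_q)$ is finite and independent of $\K$. Applying the snake lemma to $F-1$ on $0\to U(\K)\to G(\K)\to\pi_0(\K)\to 0$ shows that $\ker(F-1)$ and $\operatorname{coker}(F-1)$ on $G^j(\K)$ are finite and agree with their values for $\bar\F_q$. This works because $\operatorname{coker}$ on $U$ vanishes and $\ker$ on $U$ is $U(\F_q)$.

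The points I expect to need the most care are two. First, representability must be functorial in $\K$, i.e., the base-change map must be given by points of $G^j$. Second, $F$ must correspond to the Frobenius of the $\F_q$-form of $G^j$ and not to some twist. For the first point I will appeal to the construction through the slope spectral sequence: the groups $H^j(X,W_r\Omega^n)$ are finite-type $W_r(\K)$-modules obtained by base change. The kernel of $1-C^{-1}$ (or $F-1$) then yields the perfect group scheme, compatibly with base change of algebraically closed fields. Finally, I will assemble the long exact sequence for the fiber of $F-1$ on $R\Gamma_\et(X_\K,\Z/p(n))$ as in the prime-to-$p$ case and conclude by the five lemma.
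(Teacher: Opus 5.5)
Your proposal is correct and follows essentially the paper's route. You split off the prime-to-$p$ part, handled by $\Z/m(n)\cong\mu_m^{\otimes n}$ and proper base change. For $m=p^r$ you use $\Z/p^r(n)\cong\nu_r(n)[-n]$ and Milne's representability of $T\mapsto H^j_\et(X_T,\nu_r(n))$ by an object of $\GG$. Then Lang--Steinberg on the connected unipotent part, together with the finite \'etale quotient, gives exactly Corollary~\ref{GGobjects}.

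The d\'evissage to prime coefficients is unnecessary. Both points you flag are already settled: functoriality in $\K$ is built into Milne's representability on $(Pf/\F_q)_\et$, and the identity $\varphi_G\circ s=s\circ\varphi_S$ shows $F$ is the Frobenius of the $\F_q$-form rather than a twist.
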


For integral coefficients we obtain partial results, for example
$H^i_W(X_\K,\Z(n))$ is independent of $\K$ for $n=0$ and $1$. 
Finally, we give discuss the case of Chow groups, relate our questions 
to Parshin's and Beilinson's conjecture, and interpret them in terms
of the cycle complex and Gersten resolutions.

There are analogous questions for motivic cohomology, Suslin homology,
higher Chow groups, K-theory, \'etale K-theory, and other theories,
which can be treated by methods similar to those used in this article.
For $\A^1$-homotopy theory some results were obtained by
Richarz and Scholbach \cite{RS}.

\smallskip

{\bf Conventions:} For a finite field $\F_q$ of characteristic 
$p$, we let $\F$ be  its algebraic closure. 
The symbol $\K$ will stand
for an algebraically closed field, necessarily containing $\F$
if the characteristic of $\K$ is $p$. Motivic cohomology and
\'etale motivic cohomology are defined to be the Zariski and
\'etale hypercohomology of Bloch's cycle complex.

\smallskip

{\bf Acknowledgements:} We are grateful for discussions with and comments
from M. Morrow, K. R\"ulling, A. Schmidt, and J. Scholbach. 
AI has been used to polish the final version.

\section{General results}
In general, the functors we are considering will be injective
on extensions of algebraically closed fields by the following 
Lemma.

\begin{lemma}(Injectivity)\label{inject}
Let $k$ be an algebraically closed field and let 
$\f$ be a functor on essentially smooth $k$-algebras. 
Assume that $\f$ commutes with the following filtered colimits:
\begin{enumerate}
\item For any field extension $k\subseteq K$, 
$\colim_{k\subset E\subset K, f.g.}\f(E)\cong \f(K)$.
\item For smooth connected affine $S\to k$,  
$\colim_{V\subset S \text{affine,open}}\f(V)\cong\f(k(S))$.
\end{enumerate}
Then $\f(k)\to \f(K)$ is injective for any extension field
$K$ of $k$.
\end{lemma}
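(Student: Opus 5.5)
Take $x\in\f(k)$ whose image in $\f(K)$ vanishes; the goal is $x=0$. The idea is the standard specialization argument: the vanishing of $x$ is witnessed over a finitely generated subextension, then over an open affine of a smooth model of that subextension, and finally we specialize to a $k$-rational point, which exists because $k$ is algebraically closed.

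\textbf{Reduction to a finitely generated field.} By hypothesis (1), $\f(K)=\colim_E \f(E)$ over finitely generated subextensions $k\subset E\subset K$. The image of $x$ in $\f(K)$ is the class of its image in any $\f(E)$, so it already vanishes in some $\f(E)$ with $E/k$ finitely generated.

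\textbf{Choice of a smooth model.} Since $k$ is algebraically closed, hence perfect, $E/k$ is separably generated. Therefore $E=k(S)$ for a smooth connected affine $k$-variety $S$: take any affine model and shrink it to its smooth locus, which is a nonempty open subset. The structure map $k\to E$ factors as $k\to \mathcal O(S)\to \mathcal O(V)\to k(S)$ for every open affine $V\subset S$. By hypothesis (2), $\f(k(S))=\colim_V \f(V)$, so the image of $x$ already vanishes in $\f(V)$ for some nonempty open affine $V\subset S$.

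\textbf{Specialization.} Since $k$ is algebraically closed and $V$ is a nonempty variety of finite type over $k$, the Nullstellensatz gives a rational point $v\in V(k)$. This is a $k$-algebra map $\mathcal O(V)\to k$ whose composite with the structure map $k\to\mathcal O(V)$ is the identity of $k$. By functoriality, the identity of $\f(k)$ factors as $\f(k)\to\f(V)\to\f(k)$. The image of $x$ in $\f(V)$ is $0$, so $x=0$, as required.

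The only points needing care are that $\f$ is defined on the rings used, and the choice of the smooth model. The rings $k$, $E$, $K$, $\mathcal O(V)$ and $k(S)$ are all essentially smooth over $k$; the fields $E$ and $K$ qualify because over a perfect field they are filtered colimits of smooth algebras, which is the setting implicit in the statement. The main obstacle is the choice of model: the construction of $S$ needs separable generation, and this is exactly where the perfectness of $k$ enters. The existence of the rational point then uses the algebraic closedness of $k$ in full.
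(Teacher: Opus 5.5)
Your proof is correct and follows the paper's argument: you use (1) to make the class vanish over a finitely generated subextension $E$, realize $E$ as the function field of a smooth connected affine model, use (2) to make it vanish on some nonempty open affine $V$, and split $k\to\mathcal{O}(V)$ with a rational point given by the Nullstellensatz. Your write-up is somewhat more careful than the paper's, since you argue elementwise through the filtered colimits and explicitly use the perfectness of $k$ to obtain the smooth model.
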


\proof 
Writing $K=\colim E$ as the colimit of finitely generated field
extensions, we can assume (by the first property)
that $K$ is finitely generated. 
Then we can write it as a colimit
$K=\colim B$ of localizations of a finitely generated smooth $k$
algebra with function field $K$.
Consider the map $\f(k)\to \f(K)\cong\colim \f(B)$. 
If $x$ lies in the kernel,
then $x$ vanishes in $\f(B)$ for some $B$.
But $k\to B$ has a section by Hilbert's Nullstellensatz 
since $k$ is algebraically closed.
\proofend

It is clear by the $5$-Lemma that if the cohomology of two complexes satisfy
the hypothesis of the Lemma, then also the cohomology
of the cone of a homomorphism between them. 
Sheaves represented by a scheme satisfy the hypothesis of the 
Lemma:

\begin{example}
Let $X$ be locally of finite presentation over $k$,
then for any inverse system of quasi-compact and quasi-separated schemes 
$(T_i)$ with affine transition morphisms we have \cite[Prop. 8.13.1]{EGAIV24}
$$\Hom_k(\lim T_i,X)\cong \colim \Hom_k(T_i,X).$$
In particular, for any scheme $Y$ of finite type over $k$, 
the functor $A\mapsto \Hom(Y_A,X)$ satisfies the hypothesis of 
Lemma \ref{inject}.
\end{example}

The following Proposition shows
that Bloch's cycle complex $z^n(X,*)$ satisfies the hypothesis of 
Lemma \ref{inject}.

\begin{proposition}\label{mccolim}
a) Let $X/k$ be smooth and $k\subseteq K$ a field extension. 
Then we have an isomorphism  
$$\colim_{k\subset E\subset K,f.g.} z^n(X_E,*)\stackrel{\sim}{\to}
z^n(X_K,*)$$
as well as an isomorphism of complexes of \'etale sheaves on $X$
$$\colim_{k\subset E\subset K,f.g.} z^n(-,*)_{X_E}\stackrel{\sim}{\to}z^n(-,*)_{X_K}.$$

b) Let $Y\to S$ a morphism of smooth $k$-schemes of finite type with 
$S$ connected. Then we have an isomorphism
$$\colim_{\emptyset\not=V\subseteq S \text{open}} z^n(Y\times_SV,*)
\stackrel{\sim}{\to}z^n(Y\times_Sk(S),*).$$
as well as isomorphism of complexes of \'etale sheaves on $Y$ 
$$\colim_{\emptyset\not=V\subseteq S \text{open}} 
z^n(-,*)_{Y_V}\stackrel{\sim}{\to}z^n(-,*)_{Y_{k(S)}}.$$
\end{proposition}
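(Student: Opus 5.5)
The plan is to prove both statements degreewise, first for the groups $z^n(X_K,j)$ and then for the étale sheaves. Recall that $z^n(Y,j)$ is the free abelian group on integral closed subschemes $Z\subseteq Y\times\Delta^j$ of codimension $n$ that meet all faces $Y\times\Delta^i$ properly. The differentials are alternating sums of intersections with faces. These are compatible with flat pullback along $X_E\to X_{E'}$ and along $Y\times_S V'\to Y\times_S V$. So it suffices to show that the transition maps are injective, that the colimit maps isomorphically onto the target, and that everything is compatible with the face maps.

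For a), the set of finitely generated $E$ is filtered and $X_K\times\Delta^j=\lim_E X_E\times\Delta^j$ with affine transition maps. By \cite[§8]{EGAIV24}, every closed subscheme of $X_K\times\Delta^j$ descends to some $X_E\times\Delta^j$; this is the standard limit argument for finitely presented closed subschemes. We then enlarge $E$ until the descended subscheme is integral. Integrality need not descend, but we may instead take the image of $Z$ in $X_E\times\Delta^j$, which is integral, and pull it back. That pullback is a cycle whose components are the Galois-type conjugates, so it is not a single prime cycle.

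This is why I would argue at the level of cycles. Pullback along the field extension $X_K\to X_E$ is injective on cycle groups: it sends distinct integral subschemes to cycles with disjoint supports, with positive multiplicities. Every prime cycle on $X_K$ is defined over some finitely generated $E$, meaning it is a component of the pullback of its image. Since the components of $Z_E\times_E K$ are defined over a finite extension $E'$ of $E$, after passing to $E'$ the cycle $Z$ itself is the pullback of a cycle on $X_{E'}$. Codimension is preserved by field extension. Proper intersection with faces can be checked after faithfully flat base change, because the dimension of $Z\cap F$ is invariant under field extension. Hence the good position condition holds on $X_E$ if and only if it holds on $X_K$, and we get a bijection on generators compatible with the differentials.

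For b), the argument is the same with $Y_{k(S)}=\lim_V Y\times_S V$. An integral cycle on $Y_{k(S)}\times\Delta^j$ spreads out to its closure in $Y\times_S V\times\Delta^j$. Restriction to the generic fibre is injective on cycles that dominate $S$; cycles not dominating $S$ die after shrinking $V$. The main obstacle is proper intersection with faces. The closure $\bar Z$ may meet a face improperly over a proper closed subset of $V$. By constructibility, or generic flatness of $\bar Z\cap F\to V$, the fibre dimensions are generic away from a closed subset. Removing that subset by shrinking $V$ makes $\bar Z$ meet every face properly, since there are finitely many faces. Codimension is also preserved after shrinking, because $\bar Z$ is flat over $V$ generically.

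The sheaf versions follow by applying the group statements to each étale $U\to X$ (resp. $U\to Y$) in place of $X$ (resp. $Y$), using $U_E$ and $U\times_S V$. Since filtered colimits of presheaves commute with sheafification, and $z^n(-,*)$ is already an étale sheaf in each degree, we obtain the isomorphism of complexes of étale sheaves.
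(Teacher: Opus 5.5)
Your argument for the cycle groups is essentially the paper's. You descend prime cycles through the limits $X_K=\lim_E X_E$ and $Y_{k(S)}=\lim_V Y\times_S V$ using \cite[\S 8]{EGAIV24}, use injectivity of flat pullback, and check that codimension and proper intersection with faces are preserved (after shrinking $V$ in b)). Two small slips there. First, pullbacks of distinct prime cycles have no common components, but their supports need not be disjoint. Second, integrality \emph{does} descend along the faithfully flat map $X_K\times\Delta^j\to X_E\times\Delta^j$. So once an integral $Z$ is written as $Z_E\times_E K$ for a closed subscheme $Z_E$, the latter is automatically integral and $[Z]$ is the pullback of the prime cycle $[Z_E]$. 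The detour through images and finite extensions $E'$ is not needed.

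The genuine gap is in the sheaf statements. These sheaves live on the \'etale site of $X_K$ (resp.\ $Y_{k(S)}$). What the paper proves, and what the next corollary needs in order to apply \cite[VII Thm. 5.7]{SGAIV2}, is that $\colim_E u_E^{-1}z^n(-,*)_{X_E}\to z^n(-,*)_{X_K}$ is an isomorphism on $(X_K)_\et$, where $u_E\colon X_K\to X_E$. You only evaluate on \'etale $X_K$-schemes of the form $U_0\times_k K$ with $U_0\to X$ \'etale, and these do not cover the \'etale site of $X_K$. For example, take $a\in K$ transcendental over $k$. No nonempty $U_{0,K}$ admits an $\A^1_K$-morphism to $\A^1_K\setminus\{a\}$: the image of $U_{0,K}$ in $\A^1_K$ is the preimage of a nonempty open of $\A^1_k$, so it contains the point $a$, which lies over the generic point. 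Read literally, as a statement about the direct images $U_0\mapsto z^n(U_{0,K},*)$ on $X_\et$, your argument is fine. But that is not the statement needed to compare $H^*_\et(X_K,-)$ with $\colim_E H^*_\et(X_E,-)$.

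The missing step, which is how the paper's proof begins, is to descend an arbitrary quasi-compact \'etale $U\to X_K$. By \cite[Thm. 8.8.2(ii)]{EGAIV24} it is the base change of some $U_E\to X_E$, and by \cite[Thm. 17.7.8(ii)]{EGAIV32} one may enlarge $E$ so that $U_E\to X_E$ is \'etale. Then $U_E$ is smooth over $E$, and your cycle argument over the base field $E$ gives $\colim_{E'\supseteq E}z^n(U_{E'},*)\cong z^n(U,*)$, which is the value of the colimit sheaf at $U$. In b) you must likewise descend an \'etale $U\to Y_{k(S)}$ to an \'etale $U_V\to Y_V$ for some $V$, then apply the group statement to $U_V\to V$.
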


\proof 
We prove the isomorphism on the presheaves level. Then the first
statements follow by taking global sections and the second by
sheafification. In both cases, the map is injective by 
\cite[Cor. 8.8.2.5]{EGAIV24} and we show the surjectivity.

a) Let $U\to X_K$ be \'etale. By \cite[Thm.8.8.2(ii)]{EGAIV24},
$U$ is the base change from $U_E$ for some $E$, and by 
\cite[Thm.17.7.8(ii)]{EGAIV32} we can, after possibly increasing $E$, assume
that $U_E\to X_E$ is \'etale. Then every algebraic cycle in
$z^n(U,i)$ is the base change of a cycle from $z^n(U_E,i)$ by
\cite[Prop. 8.6.3, Cor. 8.7.3 b)]{EGAIV24}.

b) The same argument works in this situation, except that this
time we use \cite[Prop. 8.6.3, Cor. 8.7.3 a)]{EGAIV24} to descend
the algebraic cycles.
\proofend

\begin{corollary}
Motivic cohomology and  \'etale motivic cohomology satisfy the hypothesis
Lemma \ref{inject}, hence the injectivity statement
holds for them. 
\end{corollary}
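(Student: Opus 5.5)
We need to show that motivic cohomology $H^i_\M(-,A(n))$ and étale motivic cohomology $H^i_\et(-,A(n))$, viewed as functors on essentially smooth $k$-algebras, satisfy conditions (1) and (2) of Lemma \ref{inject}. Injectivity of $H^i(X_k)\to H^i(X_K)$ then follows from the lemma. For the functor we take $\f(A)=H^i_\M(X_A,\Z(n))$, or its étale analogue, with $X/k$ smooth and fixed. The plan is to pass from the colimit isomorphisms of complexes in Proposition \ref{mccolim} to colimit isomorphisms on hypercohomology. The key input is that filtered colimits are exact and commute with cohomology.

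For condition (1) in the Zariski case, I would use that Zariski motivic cohomology of a smooth scheme is computed by the global sections of the cycle complex. This is Bloch's localization theorem, which gives Zariski descent for $z^n(-,*)$, so $H^i_\M(X_E,\Z(n))=H^{i-2n}$ of $z^n(X_E,*)$. Proposition \ref{mccolim}a) gives $\colim_E z^n(X_E,*)\cong z^n(X_K,*)$. Taking cohomology commutes with filtered colimits, so $\colim_E H^i_\M(X_E,\Z(n))\cong H^i_\M(X_K,\Z(n))$. Condition (2) follows in the same way from part b), applied to $Y=X\times S\to S$.

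In the étale case, the second statements of Proposition \ref{mccolim} express the étale complex on $X_K$ as a colimit of the complexes on $X_E$. Strictly, these live on the varying schemes $X_E$, so I would push everything forward to the étale site of the limit system, or equivalently to the fixed base. I then invoke the standard fact (SGA4 VII, or the Stacks Project) that étale cohomology of a qcqs scheme commutes with filtered colimits of sheaves. One also needs the limit formula $H^i(\lim X_E,\f)=\colim H^i(X_E,\f_E)$ for compatible systems of sheaves along affine transition maps. Because the cycle complexes are unbounded below, I would use that the complexes are bounded above in cohomological degree, since $z^n(-,j)$ sits in degree $2n-j\le 2n$. I would combine this with the finite cohomological dimension of $X_E$, which is uniform for finite coefficients away from $p$. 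Alternatively, I would truncate and use a hypercohomology spectral sequence, which converges degreewise and commutes with colimits term by term.

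I expect the main obstacle to be this étale step. Hypercohomology of unbounded complexes commuting with filtered colimits needs a uniform bound on cohomological dimension. Integrally, or for $p$-torsion, the cohomological dimension of $X_K$ is bounded by $2\dim X$ plus the transcendence degree. That is not uniform in $E$ as $E$ grows, so I would argue degreewise. Only finitely many terms of the complex contribute to a given $H^i$, because $z^n$ is concentrated in degrees $\le 2n$. Since $X_E$ is noetherian of finite Krull dimension, each term has finite cohomological dimension for the relevant sheaves, with the Nisnevich-type bound $\dim X_E$ holding for $\Q$-coefficients. The $\Z$-case would then follow by comparison with $\Z/m$ and $\Q$ via the five lemma. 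Once both conditions hold, Lemma \ref{inject} gives the corollary directly.
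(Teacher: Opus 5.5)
This is essentially the paper's proof. In the Zariski case you combine Proposition~\ref{mccolim} with exactness of filtered colimits, using b) with $Y=X\times S$ for condition (2) and Bloch's localization theorem to compute $H^*_\M$ by global sections of $z^n(-,*)$. In the \'etale case you feed the sheaf-level colimit statements of Proposition~\ref{mccolim} into SGA~4, VII, Thm.~5.7, just as the paper does.

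The paper says nothing about the unbounded-below issue you raise, and your treatment of it needs correcting. First, what limits the terms contributing to a given $H^i$ is finite cohomological dimension of each $X_E$ and of $X_K$; uniformity in $E$ is not needed. The fact that $z^n(-,*)$ lives in degrees $\le 2n$ does not cut off contributions from very negative degrees. Second, you have the two torsion cases reversed: the $p$-cohomological dimension of $X_E$ is at most $\dim X+1$, whereas the $\ell$-cohomological dimension grows with $\trdeg(E/k)$. Third, with finite coefficients the issue disappears altogether. The complex $\Z/m(n)_\et$ is quasi-isomorphic to the single sheaf $\mu_m^{\otimes n}$ for $p\nmid m$, resp.\ to $\nu_r(n)[-n]$ for $m=p^r$, so SGA~4 applies directly. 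What remains is the $\Q$-case, where your bound $\dim X_E$ is correct, followed by the five lemma.
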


\proof
For motivic cohomology, the Proposition implies that the hypothesis 
of Lemma \ref{inject} is satisfied (we use b) with $Y=X\times_kS$).

The Proposition shows that the
hypothesis of \cite[VII Thm. 5.7]{SGAIV2}, stating
\'etale cohomology commutes with limits of schemes with affine transition 
maps, is satisfied.
\proofend

\subsection*{Frobenius action}
For an $\F_q$-scheme $S$, the absolute Frobenius $\varphi_S$
is the map which is the identity on the topological space of $S$ and the $q$th
power map $x\to x^q$ on the structure sheaf. It is easy to see that for a map of
schemes $f:S\to S'$ we have $f\circ \varphi_S=\varphi_{S'}\circ f$. 
If $S$ is regular noetherian, then $\varphi_S$ is flat by Kunz's theorem,
and if $S$ is of finite type over a perfect $\F_p$-algebra, then $\varphi_S$
is finite. 
For an $\F_q$-scheme $X$, and an $\F_q$-scheme $S$ (usually the spectrum 
of an algebraically closed field or a perfect $\F_q$-algebra), 
the "arithmetic Frobenius" (relative to $S$) is the map
$X\times_{\F_q}S\stackrel{\id\times\varphi_S}{\longrightarrow}
X\times_{\F_q}S$.  For a presheaf $\f$ on a suitable subcategory of 
$\F_q$-schemes, this induces the Frobenius map 
$F:\f(X\times_{\F_q}S)\to \f(X\times_{\F_q}S)$, which is a
morphism of presheaves compatible with morphisms $S\to S'$.

This applies, for example, to contravariant functors like motivic
cohomology and \'etale chomology, i.e., we act on algebraic cycles via
pull-back along the arithmetic Frobenius map.

\subsection*{The Lang-Steinberg theorem}
For a group scheme $G$ over $\F_q$, the absolute Frobenius
induces a map of $S$-valued points $G(S)$ sending $s:S\to G$ to
$\varphi_G\circ s$. 
This agrees with the above action $F$ on the presheaf represented by $G$
because $\varphi_G\circ s=s\circ \varphi_S$.

\begin{theorem}(Lang-Steinberg) For any connected
algebraic group $G$ of finite type over $\F_q$, 
the map $\varphi_G-1$ is surjective with finite kernel.
\end{theorem}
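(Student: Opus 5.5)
The Lang--Steinberg theorem has two claims: $\varphi_G-1\colon G(\bar{\F}_q)\to G(\bar{\F}_q)$ is surjective, and its kernel is finite. I will prove both on $\bar k$-points, where $\bar k=\F$ is the algebraic closure of $\F_q$. I read the statement as a statement about the Lang map on $\F$-points. Since $G$ need not be commutative, I interpret $\varphi_G-1$ as the Lang map $L(g)=\varphi_G(g)g^{-1}$; in the commutative case this is literally $\varphi_G-1$.

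\emph{Finiteness of the kernel.} This is the easy part. The kernel consists of the $g\in G(\F)$ with $\varphi_G(g)=g$. These are exactly the $\F_q$-rational points $G(\F_q)$. Since $G$ is of finite type over the finite field $\F_q$, this set is finite.

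\emph{Surjectivity, in four steps.}

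\begin{enumerate}
\item I compute the differential of $L$ at the identity. The absolute $q$-Frobenius kills all tangent vectors, because $d(x^q)=qx^{q-1}dx=0$. Hence $dL_e=d\varphi_{G,e}-\id=-\id$, which is an isomorphism.
\item More generally, I consider, for fixed $a\in G(\F)$, the morphism $L_a\colon G_{\F}\to G_{\F}$ given by $g\mapsto \varphi_G(g)\,a\,g^{-1}$. The same computation shows that its differential at every point is bijective, after translating back to the identity. So $L_a$ is \'etale, hence dominant.
\item Dominance gives each image $L_a(G)$ a dense open subset of $G_{\F}$, by Chevalley's theorem. Here I use that $G$ is connected, hence irreducible: two dense constructible sets then meet. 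So for a given $a$ there exist $g,h$ with $\varphi_G(g)g^{-1}=\varphi_G(h)\,a\,h^{-1}$.
\item Solving this equation gives $a=\varphi_G(h^{-1}g)(h^{-1}g)^{-1}$, i.e. $a=L(h^{-1}g)$. Thus every $a\in G(\F)$ lies in the image of $L$, which is surjectivity on $\F$-points.
\end{enumerate}

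\emph{Smoothness.} The main technical obstacle is step 2, because the argument that $L_a$ is \'etale needs $G_{\F}$ to be smooth; otherwise the tangent-space computation does not yield \'etaleness. The intended setting is algebraic groups, and a group scheme of finite type over a perfect field is smooth once reduced. So I would either assume $G$ is reduced, or replace $G$ by $G_{\mathrm{red}}$. Replacing $G$ by $G_{\mathrm{red}}$ is harmless because it has the same $\F$-points and is a smooth subgroup.

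Alternatively, I would avoid tangent spaces altogether. The map $L_a$ has finite fibers, since its fibers are cosets of a finite group. A morphism of irreducible varieties of the same dimension with finite fibers is dominant, and this gives step 3 directly.
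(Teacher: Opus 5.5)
Your proof is correct and is essentially the paper's: the paper cites Springer (Thm.~4.4.17) for surjectivity, whose standard proof is the argument you give ($d\varphi_G=0$ makes each $L_a$ \'etale, hence dominant, after passing to the smooth group $G_{\mathrm{red}}$, and the two dense images $L(G)$ and $L_a(G)$ in the irreducible $G_{\F}$ meet), and your identification of the kernel with the finite set $G(\F_q)$ is the pointwise form of the paper's remark that the kernel is a zero-dimensional scheme of finite type over $\F_q$. The only loose end is in your optional finite-fibre route, where you assert without proof that the group $\{x:\varphi_G(x)=axa^{-1}\}$ is finite for $a\neq e$; this holds because $a\in G(\F_{q^r})$ for some $r$, so some power of the twisted Frobenius $x\mapsto a^{-1}\varphi_G(x)a$ equals a power of $\varphi_G$, and in any case your main route does not need it.
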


\proof 
The surjectivity of $\varphi_G-1$ is proven in 
\cite[Thm. 4.4.17]{springer}, and
the kernel $H$ is a zero-dimensional scheme of finite type,
hence a finite group scheme over $\F_q$.
\proofend

\begin{example} We have short exact sequences of abelian groups
\begin{equation}\label{gacase}
0\to \F_q\to \K \stackrel{\varphi-1}{\longrightarrow} \K\to 0, 
\end{equation}
\begin{equation}\label{gmcase}
0\to \F_q^\times\to \K^\times \stackrel{\varphi-1}{\longrightarrow} 
\K^\times\to 0
\footnote{The map is $(\varphi-1)(u)=\frac{\varphi(u)}u$, so that 
$(\varphi-1)(u)=1\Leftrightarrow \varphi(u)=u$.}.
\end{equation}
For abelian varieties $A$, the map 
$$A(\K) \stackrel{F-1}{\longrightarrow} A(\K)$$
is surjective with finite kernel independent of $\K$.
\end{example}

\begin{corollary}\label{locftlang}
For any commutative group scheme $G$ locally of finite type over $\F_q$, 
the kernel and cokernel of $F-1$ on $G(\K)$ do not depend on the 
algebraically closed field $\K$ containing $\F_q$. 
\end{corollary}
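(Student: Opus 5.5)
We reduce to the connected case using the connected–\'etale sequence, treat the connected part with Lang--Steinberg, and handle the \'etale part by a direct argument on $\F$-points.

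\emph{Structure of $G$.} Since $\F_q$ is perfect, $G_{\mathrm{red}}$ is a smooth commutative subgroup scheme, and $G(\K)=G_{\mathrm{red}}(\K)$ for every reduced $\K$. So we may assume $G$ is smooth. Let $G^0$ be the identity component. It is a connected algebraic group of finite type over $\F_q$. The quotient $\pi_0(G)=G/G^0$ is an \'etale group scheme, hence corresponds to a discrete commutative group $\Gamma=\pi_0(G)(\F)$ with a continuous Galois action. Since $\F$ is algebraically closed, $\pi_0(G)(\K)=\Gamma$ for every algebraically closed $\K\supseteq\F_q$, and $F$ acts on it as the arithmetic Frobenius.

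\emph{Exact sequence of points.} We have $H^1_\et(\K,G^0)=0$ because $\K$ is algebraically closed and $G^0$ is smooth. Hence
$$0\to G^0(\K)\to G(\K)\to \Gamma\to 0$$
is exact, and it is compatible with $F$ and with the base change $\F\subseteq \K$.

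\emph{Connected part.} By the Lang--Steinberg theorem, $F-1$ on $G^0(\K)$ is surjective. Its kernel is $H(\K)$, where $H$ is the finite kernel group scheme. Since $\K$ is algebraically closed, $H(\K)=H(\F)$, so this kernel is independent of $\K$.

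\emph{Snake lemma.} Apply the snake lemma to $F-1$ acting on the short exact sequence above. Because $\operatorname{coker}(F-1\,|\,G^0(\K))=0$, we obtain
$$0\to H(\F)\to \ker(F-1\,|\,G(\K))\to \Gamma^{F}\to 0, \qquad \operatorname{coker}(F-1\,|\,G(\K))\cong \Gamma_F .$$
So the cokernel is independent of $\K$.

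\emph{The kernel.} Compare via the base change map from $\F$ to $\K$. The five lemma then shows that $\ker(F-1)$ on $G(\F)$ maps isomorphically to $\ker(F-1)$ on $G(\K)$. Alternatively, this can be seen directly: a $\K$-point fixed by $F$ is a map $\Spec\K\to G$ equivariant for Frobenius. Its image is a closed point defined over $\F_q$, because an $F$-fixed point of $G(\K)$ lies in $G(\F_q)=G(\K)^{F}$.

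\emph{Main obstacle.} The delicate point is the connected–\'etale sequence for $G$ only locally of finite type. We need that $G^0$ exists and is of finite type, and that $G(\K)\to\pi_0(G)(\K)$ is surjective. The first holds for commutative group schemes locally of finite type over a field (SGA3 / Demazure--Gabriel). Surjectivity holds because each connected component is a $G^0$-torsor over an algebraically closed field, hence has a $\K$-point.
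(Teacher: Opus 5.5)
Your proof is correct and follows the same route as the paper. Both arguments use the connected--\'etale sequence $0\to G^0\to G\to\pi_0(G)\to 0$, exactness on $\K$-points, Lang--Steinberg for $G^0$ with finite kernel $H$ satisfying $H(\K)=H(\F)$, and the identification $\pi_0(G)(\K)=\pi_0(G)(\F)$; your extra steps (passing to $G_{\mathrm{red}}$ so that Lang's theorem applies to a smooth group, writing out the snake and five lemma comparisons, and noting directly that $G(\K)^{F}=G(\F_q)$) only make explicit what the paper leaves implicit.
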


\proof
By \cite[II \S 5 Prop. 1.8]{DG} we have a short exact sequence
$$ 0\to G^0 \to G\to \pi_0(G)\to 0$$
with $G^0$ connected and of finite type, and $\pi_0(G)$ \'etale. 
Because $\pi_0(G)$ is an \'etale group scheme, its $\K$-rational points
are canonically identified with its $\bar \F_q$-rational points, 
thus $F-1$ acts on a discrete group independent of $\K$.
Taking $\K$-rational points is exact, so that it suffices to show that
$G^0(\K)\stackrel{F-1}{\longrightarrow}G^0(\K)$ is surjective with kernel
independent of $\K$. But a surjection of varieties of
finite type induces a surjection of $\K$-rational points
for any algebraically closed field $\K$. As for the finite kernel,
if $H=\Spec A$, then
$H(\K)=H(\K')$ for any extension of algebraically closed fields
because any morphism $A\to \K'$ factors through $\K$ as all
residue fields of $A$ are finite extensions of $\F_q$.
\proofend

\subsection*{Cohomology of logarithmic de Rham-Witt sheaves}
We recall Milne's results on the \'etale cohomology of
the logarithmic de Rham-Witt sheaves.
Let $k$ be a perfect field of characteristic $p$
and $Pf/k$ be the category of perfect affine 
schemes over $k$, i.e., the spectra of $k$-algebras $A$ satisfying $A=A^p$.
A group object $G\in Pf/k$ is called a perfect group scheme, and
is said to be "algebraic" if there
exists an affine group scheme $G_0$ of finite type over $k$ 
such that $G(A)=G_0(A)$ 
for all perfect $k$-algebras $A$. Then $G$ is called the perfection
$G_0^{pf}$ of $G_0$. 
Let $\GG$ be  the category of commutative algebraic perfect group schemes, 
which are killed by some power of $p$. If we equip $Pf/k$ with 
the \'etale topology, then $\GG$ is a full abelian subcategory of the 
category of sheaves on $(Pf/k)_\et$ closed under extensions.

The identity component $G^0$ of an object $G$ of  $\GG$ has a composition
series whose quotients are isomorphic to $\G_a^{pf}$, and the quotient
$G_\et$ is \'etale \cite[Lemme 2.4]{berth}. 

\begin{corollary}\label{GGobjects}
For an object $G$ of $\GG$, the kernel and cokernel of $F-1$ 
on $G(\K)$ are independent of the algebraically closed field
$\K$ and are finite.
\end{corollary}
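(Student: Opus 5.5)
We reduce to the Lang–Steinberg situation of Corollary \ref{locftlang}, using the structure theory recalled above. Let $G$ be an object of $\GG$. By definition $G$ is the perfection $G_0^{pf}$ of an affine commutative group scheme $G_0$ of finite type over $k$ (here $k=\F_q$, since $F$ must be defined). Since $\K$ is perfect, $G(\K)=G_0(\K)$, compatibly with the Frobenius action $F$ on $\K$-points. Corollary \ref{locftlang} applied to $G_0$ then shows that the kernel and cokernel of $F-1$ on $G(\K)$ are independent of $\K$.

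It remains to prove finiteness. One could argue directly, using the Lang–Steinberg theorem for the connected component $G_0^0$ and the étale quotient $\pi_0(G_0)$. A cleaner route uses the filtration recalled just before the statement: $0\to G^0\to G\to G_\et\to 0$, where $G^0$ has a composition series with graded pieces $\G_a^{pf}$. On $\K$-points these sequences stay exact. For $G_\et$ and its successive extensions this holds because étale sheaves are evaluated at the strictly henselian $\K$; more concretely, a surjection of perfections of varieties induces a surjection on $\K$-points.

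Apply the snake lemma for $F-1$ to each short exact sequence. This shows that if the kernel and cokernel of $F-1$ on the sub and on the quotient are finite, then so are those on the middle term. For $\G_a^{pf}(\K)=\K$, the sequence \eqref{gacase} gives kernel $\F_q$ and cokernel $0$. Induction along the composition series therefore gives finiteness, in fact surjectivity of $F-1$, on $G^0(\K)$.

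For $G_\et$: it is a finite étale group scheme, killed by a power of $p$ and algebraic, hence of finite type. So $G_\et(\K)=G_\et(\F)$ is a finite group, and the kernel and cokernel of $F-1$ on it are trivially finite. One last application of the snake lemma yields finiteness for $G$.

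The main subtlety is the first step: justifying that $F$ on $G(\K)$ agrees with the Frobenius on $G_0(\K)$, and that the exact sequences in $\GG$ (sheaves on $(Pf/k)_\et$) stay exact on $\K$-points. Both follow because $\K$ is perfect and strictly henselian, so $\K$-points are the stalk at a geometric point. If one prefers to avoid this, one can instead invoke Corollary \ref{locftlang} for independence and use the composition series only for finiteness.
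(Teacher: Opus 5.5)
Your proof is correct, and its finiteness half is exactly the paper's argument: the sequence $0\to G^0(\K)\to G(\K)\to G_\et(\K)\to 0$, the composition series of $G^0$ with quotients $\G_a^{pf}$ together with \eqref{gacase}, and the snake lemma. The one difference is how you get independence of $\K$. You apply Corollary \ref{locftlang} to the finite-type model $G_0$, which is valid. The paper instead reads independence off the same diagram, comparing it for $\K$ and $\K'$: kernel $\F_q$ and cokernel $0$ on each $\G_a^{pf}$-piece, $G_\et(\K)=G_\et(\K')$, and the five lemma. So your first step is correct but not needed.
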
 

\proof
Given an extension $\K'/\K$ of algebraically closed fields,
compare the following diagram
$$\begin{CD}
0@>>> G^0(\K)@>>> G(\K)@>>> G_\et(\K) @>>> 0\\
@. @VF-1VV@VF-1VV@VF-1VV\\
0@>>> G^0(\K)@>>> G(\K)@>>> G_\et(\K) @>>> 0.
\end{CD}$$
with the corresponding diagram for $\K'$. 
Filtering $G^0(\K)$ with quotients $\G_a^{pf}$ and using the sequence
\eqref{gacase} we see that 
the left vertical map is surjective with kernel independent of $\K$
by the Lang-Steinberg theorem,
and the two right terms are the same for $\K'$.
\qed

\begin{lemma}\cite[Lemma 1.8]{milne_values}
The functor $T\mapsto H^i_\et(X_T,\nu_r(n))$ on $(Pf/k)_\et$ 
is represented by an object of $\GG$. 
\end{lemma}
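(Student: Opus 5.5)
The statement is Milne's \cite[Lemma 1.8]{milne_values}. Here $X$ is smooth and proper over the perfect field $k$, and $\nu_r(n)=W_r\Omega^n_{\log}$. I would prove it by dévissage down to coherent cohomology, using that $\GG$ is an abelian subcategory of the \'etale sheaves on $Pf/k$ that is closed under extensions.

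\textbf{Step 1: coherent input.} For a coherent locally free sheaf $\mathcal E$ on $X$, cohomology commutes with flat base change. So for a perfect $k$-algebra $A$ we get $H^i(X_A,\mathcal E_A)=H^i(X,\mathcal E)\otimes_k A$. This functor is represented by $(\G_a^{pf})^{d}$ with $d=\dim_k H^i(X,\mathcal E)$. Here $H^i(X,\mathcal E)$ is finite dimensional because $X$ is proper. The same holds, with $p$-linear twists, for the sheaves $\Omega^j$, $Z\Omega^j$, $B\Omega^j$, since these are locally free $\mathcal O_X$-modules via Frobenius. Because $A$ is perfect, Frobenius twists do not change the perfection.

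\textbf{Step 2: the case $r=1$.} The fundamental exact sequence
\[
0\to \nu_1(n)\to Z\Omega^n\xrightarrow{C-1}\Omega^n\to 0
\]
is exact on $X_{A,\et}$. Taking cohomology gives a long exact sequence of functors in $A$. The terms $H^i(Z\Omega^n)$ and $H^i(\Omega^n)$ are representable as in Step 1, and $C-1$ is a morphism of perfect group schemes. Kernels and cokernels of morphisms in $\GG$ lie in $\GG$. So $H^i(X_A,\nu_1(n))$ is an extension of a kernel by a cokernel, hence lies in $\GG$ by closure under extensions.

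Some care is needed to check this on \'etale stalks over $Pf/k$, that is, to check that the cohomology functor is already an \'etale sheaf. It is, because it is represented stalkwise and compatible with base change.

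\textbf{Step 3: induction on $r$.} Use the exact sequence
\[
0\to\nu_{r-1}(n)\to\nu_r(n)\to\nu_1(n)\to 0.
\]
Each $H^i(X_A,\nu_r(n))$ then sits between cokernel and kernel of connecting maps between objects of $\GG$. By the five-lemma-type closure of $\GG$, it is again in $\GG$, and it is killed by $p^r$.

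\textbf{Main obstacle.} The delicate point is Step 1/2: verifying that the maps really are morphisms of the perfect group schemes. This concerns in particular $C-1$, which is only $p^{-1}$-linear. It also requires that the coherent cohomology functors are representable on $Pf/k$ rather than just on points. I would handle this by working with perfections throughout, where the semilinearity becomes harmless.
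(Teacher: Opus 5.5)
The paper gives no argument of its own here, only the citation to Milne. Your d\'evissage is the standard route:
\begin{itemize}
\item coherent cohomology of $Z\Omega^n$ and $\Omega^n$ is represented by perfections of vector groups;
\item the Cartier sequence handles $r=1$;
\item induction on $r$ uses $0\to\nu_{r-1}(n)\to\nu_r(n)\to\nu_1(n)\to0$;
\item $\GG$ is closed under kernels, cokernels and extensions.
\end{itemize}
Handling the $p^{-1}$-linearity of $C$ by working over perfect algebras is also right. But one step is false, and with it your argument would prove a false statement. The presheaf $T\mapsto H^i_\et(X_T,\nu_r(n))$ is \emph{not} an \'etale sheaf in general, and being ``represented stalkwise and compatible with base change'' does not make it one. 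Take $X=\Spec k$, $r=1$, $n=0$, $i=1$. (Any $X$ with a $k$-rational point also works, since then $H^1_\et(T,\Z/p)$ is a direct summand of $H^1_\et(X_T,\Z/p)$.) By Artin--Schreier, $H^1_\et(\Spec A,\Z/p)=A/(F-1)A$. For $A$ the perfection of $k(t)$, the class of $t$ is nonzero, because the Artin--Schreier extension $y^p-y=t$ of $k(t)$ stays nontrivial over the purely inseparable extension $A$. Yet the class dies on the \'etale covering obtained by adjoining $y$. So the presheaf is not even separated, hence not representable. In your Step 2 the offending term is exactly the \emph{presheaf} cokernel of $C-1$ on $H^{i-1}$: for $X=\Spec k$, $n=0$, the map $C-1$ is $F^{-1}-1$ on $A$.

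So the lemma can only hold for the associated \'etale sheaf $\underline H^i(X,\nu_r(n))=R^i\pi_*\nu_r(n)$ on $(Pf/k)_\et$, and that is what you should prove. This is harmless for the paper's application: over $T=\Spec\K$ with $\K$ algebraically closed, every \'etale covering splits, so the presheaf and its sheafification both take the value $H^i_\et(X_\K,\nu_r(n))$. With this reading your proof goes through once you replace the ``already a sheaf'' claim by sheafification. The coherent terms are representable, hence already sheaves, and sheafification is exact, so the long exact sequences of presheaves become exact sequences of sheaves:
\[0\to\operatorname{coker}\bigl(C-1\colon\underline H^{i-1}(Z\Omega^n)\to\underline H^{i-1}(\Omega^n)\bigr)\to\underline H^i(X,\nu_1(n))\to\ker\bigl(C-1\colon\underline H^{i}(Z\Omega^n)\to\underline H^{i}(\Omega^n)\bigr)\to0.\]
Here the cokernel is formed in sheaves, and both outer terms lie in $\GG$. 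Step 3 works the same way. You should also add a sentence on why the Cartier sequence is exact on $(X_A)_\et$ for a perfect but possibly non-noetherian $A$: this uses $\Omega^1_A=0$, the Cartier isomorphism over a perfect base, and Artin--Schreier \'etale-locally.
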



We write $\underline H^i_\et(X,\nu_r(n))$ for the representing object,
$U^i(X,\nu_r(n))$ for its connected component, and 
$D^i(X,\nu_r(n))=\underline H^i_\et(X,\nu_r(n))/ U^i(X,\nu_r(n))$
for its \'etale quotient. 

\begin{theorem}\cite[Thm. 1.11]{milne_values}
If $X$ is smooth and proper of dimension $d$ over a perfect field $k$,
then
\begin{align*}
 U^i(X,\nu_r(n))&\cong \Ext^1(U^{d+1-i}(X,\nu_r(d-n)),\Q_p/\Z_p)\\
D^i(X,\nu_r(n))&\cong \Hom(D^{d-i}(X,\nu_r(d-n)),\Q_p/\Z_p).
\end{align*}
\end{theorem}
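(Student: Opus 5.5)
The plan is to derive both isomorphisms from a single duality statement in the derived category $D((Pf/k)_\et)$, and then split that statement into its étale and connected parts. First I would construct the pairing. The product of logarithmic de Rham–Witt forms gives a pairing of étale sheaves
$$\nu_r(n)\otimes\nu_r(d-n)\to\nu_r(d),$$
and there is a trace isomorphism $\underline H^{d+1}_\et(X,\nu_r(d))\cong\Z/p^r$ for smooth proper $X$ of dimension $d$. Both are functorial in perfect $T\in Pf/k$. Combining them gives a morphism
$$R\underline\Gamma(X,\nu_r(n))\to R\underline{\Hom}\big(R\underline\Gamma(X,\nu_r(d-n)),\Z/p^r\big)[-d-1],$$
where $R\underline\Gamma$ is the complex of sheaves $T\mapsto R\Gamma_\et(X_T,-)$. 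By \cite[Lemma 1.8]{milne_values} its cohomology sheaves are the objects $\underline H^i_\et(X,\nu_r(n))$ of $\GG$.

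The main step is to show that this morphism is an isomorphism; I expect it to be the main obstacle. I would first reduce to $r=1$ by dévissage, using the exact sequences $0\to\nu_{r-1}(n)\to\nu_r(n)\to\nu_1(n)\to0$ and the five lemma. For $r=1$ I would use the exact sequence
$$0\to\nu_1(n)\to Z\Omega^n\xrightarrow{\,1-C\,}\Omega^n\to0$$
(étale locally), together with $0\to B\Omega^n\to Z\Omega^n\xrightarrow{C}\Omega^n\to0$. This reduces the claim to coherent Serre–Grothendieck duality on $X$ between $\Omega^n$ and $\Omega^{d-n}$, and to the duality between $B\Omega^n$ and $Z\Omega^{d-n}$ under the Cartier operator. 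These coherent cohomologies are represented by vector groups, so they are perfections of powers of $\G_a$. The delicate points are checking compatibility of the trace with $C$ and $1-C$, and checking that Cartier duality of the vector groups matches Breen's duality for $\G_a^{pf}$. This is essentially the approach of Milne and Ekedahl.

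Next I would compute $R\underline{\Hom}(G,\Q_p/\Z_p)$ for $G\in\GG$, using Breen's computations on the perfect site.
\begin{itemize}
\item For $G$ étale (finite, since $k$ is perfect), only $\Hom(G,\Q_p/\Z_p)$ survives, and it is the Pontryagin dual.
\item For $G=G^0$ connected unipotent, filtered with quotients $\G_a^{pf}$ as above, $\Hom(G^0,\Q_p/\Z_p)=0$, and $\Ext^1(G^0,\Q_p/\Z_p)$ is again an object of $\GG$ that is connected and unipotent (the Serre dual). All higher Ext vanish.
\end{itemize}
Replacing $\Z/p^r$ by $\Q_p/\Z_p$ on the $p^r$-torsion complexes, the hyper-Ext spectral sequence then has only two nonzero columns. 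It yields short exact sequences
$$0\to\Ext^1\big(\underline H^{d+1-i}(\nu_r(d-n)),\Q_p/\Z_p\big)\to\underline H^i(\nu_r(n))\to\Hom\big(\underline H^{d-i}(\nu_r(d-n)),\Q_p/\Z_p\big)\to0.$$

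Finally, in this sequence the left term is connected, because $\Ext^1$ kills the étale part $D$ and sends $U$ to a connected group. The right term is étale, because $\Hom$ kills $U$. By uniqueness of the connected–étale decomposition in $\GG$, the sequence must coincide with $0\to U^i\to\underline H^i\to D^i\to0$. This identifies $U^i(X,\nu_r(n))$ with $\Ext^1(U^{d+1-i}(X,\nu_r(d-n)),\Q_p/\Z_p)$ and $D^i(X,\nu_r(n))$ with $\Hom(D^{d-i}(X,\nu_r(d-n)),\Q_p/\Z_p)$. The shift by one in the degree is exactly the discrepancy between the $\Ext^1$ and $\Hom$ contributions.
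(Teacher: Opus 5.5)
The paper gives no proof of this statement; it quotes it from Milne. Your outline is the standard argument behind Milne's theorem:
\begin{itemize}
\item a duality for $R\underline\Gamma(X,\nu_r(\cdot))$ in the derived category of $(Pf/k)_\et$, proved for $r=1$ via the Cartier operator and coherent duality;
\item Breen's computation $R\underline\Hom(\G_a^{pf},\Q_p/\Z_p)\cong\G_a^{pf}[-1]$;
\item the connected--\'etale splitting.
\end{itemize}
But you put the trace in the wrong degree, and the isomorphism $\underline H^{d+1}_\et(X,\nu_r(d))\cong\Z/p^r$ you invoke is false on the perfect site: that sheaf is $0$, so any pairing built through it is zero. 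The correct trace is $\underline H^{d}_\et(X,\nu_r(d))\to\Z/p^r$, the class of a point in $H^{2d}(X_{\bar k},\Z/p^r(d))$.

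To see the vanishing, note that the $\bar k$-points of $\underline H^{d+1}_\et(X,\nu_r(d))$ are $H^{d+1}(X_{\bar k},\nu_r(d))$. For $r=1$, the sequence $0\to\nu_1(d)\to\Omega^d\xrightarrow{1-C}\Omega^d\to0$ identifies this group with the cokernel of $1-C$ on $H^d(X_{\bar k},\Omega^d)$. That space is a sum of copies of $\bar k$ on which $1-C$ acts as $x\mapsto x-x^{1/p}$, hence surjectively; d\'evissage then handles $r>1$. The isomorphism in degree $d+1$ you have in mind is the arithmetic one over a finite field, where $H^1(\hat\Z,-)$ supplies the extra degree.

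The shift $[-d-1]$ is also wrong formally. Running the hyper-Ext spectral sequence with it yields $\Ext^1(\underline H^{d+2-i})$ and $\Hom(\underline H^{d+1-i})$, not the indices in the statement. The short exact sequence you actually display is the one coming from a pairing into $\Z/p^r[-d]$. With the trace placed in degree $d$, the rest of your argument goes through.

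Two smaller points. First, in the $r=1$ step, $B\Omega^n$ and $Z\Omega^{d-n}$ are not dual to each other. They are each other's annihilators under $(\alpha,\beta)\mapsto C(\alpha\wedge\beta)$. The dualities you need are $\Omega^n$ with $\Omega^{d-n}$, and $Z\Omega^n$ with $\Omega^{d-n}/B\Omega^{d-n}$. Under these, the dual of $1-C\colon Z\Omega^n\to\Omega^n$ is $1-C^{-1}\colon\Omega^{d-n}\to\Omega^{d-n}/B\Omega^{d-n}$, whose kernel is $\nu_1(d-n)$.

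Second, replacing $\Z/p^r$ by $\Q_p/\Z_p$ is legitimate only if $R\underline\Hom(-,\Z/p^r)$ is formed in $D(\Z/p^r)$, where it agrees with $R\underline\Hom_\Z(-,\Q_p/\Z_p)$. Over $\Z$ one has, for instance, $\underline\Ext^1_\Z(\Z/p^r,\Z/p^r)\cong\Z/p^r$. The \'etale part would then also feed the $\Ext^1$ column, and your identification of the two ends of the sequence would fail.
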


\begin{proposition}\label{n=d}
We have $U^i(X,\nu_r(0))=U^i(X,\nu_r(d))=0$ for all $i$ and $r$.
\end{proposition}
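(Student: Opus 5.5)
We first handle weight $0$ directly, and then obtain weight $d$ from it by Milne's duality theorem (Thm.\ 1.11 of \cite{milne_values} quoted above).

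\textbf{Weight $0$.} Here $\nu_r(0)=\Z/p^r$ is the constant sheaf. We claim that the functor $T\mapsto H^i_\et(X_T,\Z/p^r)$ on $(Pf/k)_\et$ is represented by an \'etale perfect group scheme, so that its identity component vanishes.

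\begin{itemize}
\item Work by induction on $r$, using the exact sequences $0\to\Z/p\to\Z/p^{r}\to\Z/p^{r-1}\to 0$. It therefore suffices to treat $r=1$ and then argue via the long exact sequence: $\GG$ is closed under extensions, and the property of having trivial identity component passes along exact sequences. Some care is needed here, since a quotient of an \'etale object is \'etale but extensions must also be controlled. We would use that $U^i$ is the maximal connected subobject, so that the relevant long exact sequence of connected components shows that the $U^i$ of the middle term embeds into an extension of connected pieces. Those pieces are zero.
\item For $r=1$, use the Artin--Schreier sequence $0\to\Z/p\to\mathcal O\xrightarrow{\varphi-1}\mathcal O\to0$. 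For $T=\Spec A$ with $A$ perfect, proper base change gives $H^j(X_A,\mathcal O)=H^j(X,\mathcal O)\otimes_k A$ (after perfection), which is represented by a perfect vector group $V_j=(\G_a^{pf})^{h_j}$. The sheaf $\underline H^i_\et(X,\Z/p)$ is then an extension of $\ker(\varphi-1\mid V_i)$ by $\operatorname{coker}(\varphi-1\mid V_{i-1})$.
\item The map $\varphi-1$ on a vector group $V$ with semilinear Frobenius is surjective as a map of \'etale sheaves on perfect schemes, because each equation $x^p-ax=b$ in coordinates is \'etale-locally solvable (Lang--Steinberg type). Its kernel is finite \'etale. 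Hence $\underline H^i_\et(X,\Z/p)$ is \'etale, and $U^i(X,\nu_r(0))=0$.
\end{itemize}

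Alternatively, for $k$ algebraically closed one can observe that $H^i_\et(X_\K,\Z/p^r)$ is independent of $\K$ by proper base change for torsion sheaves, which is valid for $p$-torsion. A connected nonzero object of $\GG$ would have points changing with $\K$: its $\G_a^{pf}$ quotients have $\K$-points equal to $\K$. This contradicts the independence. This is probably the cleanest route. Over a general perfect $k$, pass to $\bar k$, since the formation of $U^i$ commutes with base change.

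\textbf{Weight $d$.} Apply the first isomorphism of Milne's theorem with $n=d$:
$$U^i(X,\nu_r(d))\cong \Ext^1(U^{d+1-i}(X,\nu_r(0)),\Q_p/\Z_p)=\Ext^1(0,\Q_p/\Z_p)=0.$$

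\textbf{Main obstacle.} The key step is justifying that vanishing of the connected part can be detected by rigidity of the geometric points. It rests on two facts:
\begin{itemize}
\item $G^0(\K)\subsetneq G^0(\K')$ whenever $G^0\neq0$ and $\K\subsetneq\K'$, which follows from the composition series with quotients $\G_a^{pf}$ together with surjectivity on points;
\item the injectivity Lemma \ref{inject}, together with proper base change, which identifies $H^i_\et(X_\K,\Z/p^r)$ with $H^i_\et(X_{\K'},\Z/p^r)$.
\end{itemize}
Given these two facts, $G(\K)=G(\K')$ forces $G^0(\K)=G^0(\K')$, and hence $G^0=0$.
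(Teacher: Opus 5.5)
Your proposal is correct and follows the paper's proof. For weight $0$ the paper just cites the proper base change theorem; you fill this in by noting that a nonzero connected object of $\GG$ has $\K$-points that grow with $\K$, while $H^i_\et(X_\K,\Z/p^r)$ does not. For weight $d$ you substitute $U^{d+1-i}(X,\nu_r(0))=0$ into Milne's duality $U^i(X,\nu_r(d))\cong\Ext^1(U^{d+1-i}(X,\nu_r(0)),\Q_p/\Z_p)$, exactly as the paper does (your Artin--Schreier d\'evissage for weight $0$ is also a valid alternative, but it is not needed).
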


\proof
The statement for $n=0$ follows from the proper base change theorem,
and then the statement for $n=d$ follows by duality.
\proofend

\section{Motivic cohomology over algebraically closed fields}
In this section we discuss how motivic cohomology of a smooth
and proper scheme over an algebraically closed field $\K$ 
changes if we base extend to another algebraically closed field.
Since
$$H^1_\M(\K,\Z(1))\cong H^1_\et(\K,\Z(1))\cong \K^\times,$$
depends on $\K$, and since every variety $X$ over $\K$ has a $\K$-rational
point, so that $H^1_\M(\K,\Z(1))$ is a direct summand of 
$H^1_\M(X,\Z(1))$, 
the question is only meaningful with finite coefficients.
We start with coefficients invertible in $\K$.

\begin{theorem}\label{fincoef}
Assume that $\frac{1}{m}\in \K$. 

1) The groups $H^i_\et(X_\K,\Z/m(n))$ are finite and do not depend on $\K$.

2) [Suslin rigidity] The groups 
$H^i_\M(X_\K,\Z/m(n))$ do not depend on $\K$.
\end{theorem}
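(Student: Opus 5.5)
The plan is to reduce part 1) to classical étale cohomology with coefficients $\mu_m^{\otimes n}$, and to prove part 2) via the Beilinson--Lichtenbaum comparison in low degrees together with Suslin's rigidity in the remaining degrees.

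\textbf{Part 1).} For $\frac1m\in\K$, the étale sheafification of Bloch's cycle complex with $\Z/m$-coefficients is quasi-isomorphic to $\mu_m^{\otimes n}$. This follows from the Suslin--Voevodsky/Geisser--Levine comparison, which now holds since the Bloch--Kato conjecture is known. Hence
$$H^i_\et(X_\K,\Z/m(n))\cong H^i_\et(X_\K,\mu_m^{\otimes n}).$$
It therefore suffices to treat the latter groups.
\begin{itemize}
\item Finiteness for $X$ proper over an algebraically closed field is SGA~4, exp.~XIV.
\item Independence of $\K$ follows from the smooth and proper base change theorem. Apply it to the structure map $X\to\Spec k$, with $k$ algebraically closed over which $X$ is defined; the base change along $\Spec\K\to\Spec k$ is then an isomorphism.
\item Note that $\mu_m^{\otimes n}$ is locally constant and becomes constant over $k$, so no twisting issue arises.
\end{itemize}
Concretely, one can also argue with Lemma~\ref{inject} for injectivity plus a specialization argument for surjectivity. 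I would simply cite the base change theorem.

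\textbf{Part 2), strategy.} Write $K=\colim E$ over finitely generated extensions $E/k$ using Proposition~\ref{mccolim}. Then write each $E$ as the colimit over smooth $k$-models $S$, so that
$$H^i_\M(X_\K,\Z/m(n))=\colim_S H^i_\M(X\times_k S,\Z/m(n)).$$
Injectivity is Lemma~\ref{inject}. For surjectivity, let $s_0,s_1\in S(k)$ be two $k$-points. The key claim is that the specialization maps
$$s_0^*,\ s_1^*:H^i_\M(X\times S,\Z/m(n))\to H^i_\M(X,\Z/m(n))$$
agree. Granting this, let $\pi^*$ denote pullback along the projection $X\times S\to X$ and $\eta$ the generic point. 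Then a class $\alpha\in H^i_\M(X\times S,\Z/m(n))$ and $\pi^*s_0^*\alpha$ have the same specializations at all $k$-points. To conclude $\alpha|_\eta=\pi^*s_0^*\alpha|_\eta$, I would use the Suslin--Gabber argument: reduce to $S$ a smooth curve by connecting points through curves, then use the rigidity of homotopy invariant presheaves with transfers (Suslin rigidity). This requires that motivic cohomology with finite coefficients is a homotopy invariant presheaf with transfers on smooth $k$-schemes, which holds since $X\times-$ preserves this structure.

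\textbf{Part 2), the key step and main obstacle.} The main obstacle is to establish this rigidity input cleanly: $\F(-)=H^i_\M(X\times -,\Z/m(n))$ is a homotopy invariant presheaf with transfers and $m$-torsion, so Suslin's theorem applies. The theorem gives that $\F(\K)\cong\F(k)$ for extensions of algebraically closed fields, with injectivity supplied by Lemma~\ref{inject}. Note that this uses the $m$-torsion and the invertibility of $m$, and it is exactly where $p$-coefficients fail. An alternative route is to compare with étale cohomology via Beilinson--Lichtenbaum in degrees $i\le n$ and use part 1). However, this does not cover $i>n$. So I would invoke Suslin's rigidity theorem as stated in Suslin--Voevodsky, verifying the transfer structure via Voevodsky's identification of higher Chow groups with motivic cohomology of $X\times-$.
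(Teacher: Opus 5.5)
Your proposal is correct and matches the paper. Part 1) uses $\Z/m(n)\cong\mu_m^{\otimes n}$ (Geisser--Levine) and proper base change. Part 2) uses that $H^i_\M(X\times -,\Z/m(n))$ is an $m$-torsion homotopy invariant presheaf with transfers, so Suslin's rigidity theorem applies; this is exactly the paper's ``high powered'' proof, and the paper also notes that Suslin's original argument via curves works directly. The intermediate sketch comparing specializations at $k$-points becomes unnecessary once you cite the theorem in the form $\mathcal{F}(k)\cong\mathcal{F}(\K)$; as written, it would also need rigidity applied on $S_\K$ over $\K$, because the generic point of $S$ is not a $k$-point.
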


\proof 
1) This follows from $\Z/m(n)\cong \mu_m^{\otimes n}$ by 
\cite{geisser-levine}, and the proper base-change theorem
\cite[Cor. VI.2.6]{milne-etale}.

2) Suslin's original proof \cite{suslin} works for motivic cohomology.
For a high powered proof, combine \cite[Cor. 2.29]{MVW}
with \cite[Thm. 7.20]{MVW}.
\proofend

In contrast, it follows from Theorem \ref{schoen} that 
$H^i_\M(X_\K,\Z/m(n))$ can be infinite (in characteristic $0$).
We now consider coefficients at the characteristic where we have 
the following result for certain weights:

\begin{proposition}
1) For $n=0, d$, the groups  $H^i_\et(X_\K,\Z/p(n))$ do not depend on $\K$,
and are finite.

2) For $n=0, 1$, and $d$, the groups $H^i_\M(X_\K,\Z/p(n))$ do not 
depend on $\K$ and are finite.
\end{proposition}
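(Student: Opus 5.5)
For part 1), I would identify the $p$-primary étale motivic complex with the logarithmic de Rham--Witt sheaf. By Geisser--Levine, $\Z/p^r(n)_\et\cong \nu_r(n)[-n]$ for smooth $X$ in characteristic $p$. Hence $H^i_\et(X_\K,\Z/p(n))\cong H^{i-n}_\et(X_\K,\nu_1(n))$. Since $\K$ is a perfect $\F$-algebra, by Milne's Lemma this group equals $\underline H^{i-n}_\et(X,\nu_1(n))(\K)$, the $\K$-points of an object of $\GG$. Such an object is an extension of the étale quotient $D^{i-n}$ by the connected part $U^{i-n}$. The $\K$-points of the étale part are independent of $\K$ and finite; I would use the same argument as in Corollary \ref{locftlang}, namely that all residue fields are finite over $\F_q$. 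By Proposition \ref{n=d}, $U^{i-n}(X,\nu_1(n))=0$ for $n=0,d$. Taking $\K$-points of this extension therefore gives a finite group independent of $\K$. Concretely, for $n=0$ one can also argue directly: $\nu_1(0)=\Z/p$, and the proper base change theorem applies as in Theorem \ref{fincoef}.

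For part 2), I would handle each weight separately.

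\textbf{Weight $0$.} We have $\Z(0)=\Z$, so $H^i_\M(X_\K,\Z/p(0))$ is the Zariski cohomology of a constant sheaf on an irreducible space. It is $\Z/p$ for $i=0$ and $0$ otherwise.

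\textbf{Weight $1$.} We have $\Z(1)\cong \G_m[-1]$. The universal coefficient sequence gives
$$0\to \mathcal O^\times(X_\K)/p\to H^1_\M(X_\K,\Z/p(1))\to {}_p\Pic(X_\K)\to 0$$
and
$$H^2_\M(X_\K,\Z/p(1))\cong \Pic(X_\K)/p,$$
with all other degrees vanishing. Since $X$ is proper, $\mathcal O^\times(X_\K)=\K^\times$, which is $p$-divisible. The group $\Pic(X_\K)$ is the group of $\K$-points of the Picard scheme. Its identity component $\Pic^0$ is an extension of an abelian variety by a unipotent group, and is divisible on $\K$-points. So $\Pic/p=\NS/p$, which is finite and independent of $\K$, since $\NS$ is a finitely generated group invariant under extensions of algebraically closed fields.

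The main obstacle is ${}_p\Pic(X_\K)$. The $p$-torsion of $\Pic^0$ can contain infinitesimal or non-reduced pieces. However, ${}_p\Pic^0_{\mathrm{red}}(\K)$ is the $\K$-points of a finite group scheme, hence independent of $\K$ by the argument in Corollary \ref{locftlang}. A cleaner route is to compare with the étale side. By Hilbert 90, $H^1_\M(X,\Z/p(1))\cong H^1_\et(X,\Z/p(1))=H^0_\et(X,\nu_1(1))$, and the latter is $H^0(X,\mathcal O^\times/p)$-type data. This is the $\K$-points of $\underline H^0(X,\nu_1(1))$, whose connected part $U^0(X,\nu_1(1))$ is dual by Milne's Theorem to $U^{d+1}(X,\nu_1(d-1))$, which vanishes for degree reasons. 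So it is again the points of an étale group, finite and independent of $\K$.

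\textbf{Weight $d$.} For $n=d$ I would use the Bloch--Kato--Gabber/Geisser--Levine result that $\Z/p(d)_\M\to R\epsilon_*\Z/p(d)_\et$ is an isomorphism in degrees $\le d+1$. Motivic cohomology $H^i_\M(X,\Z/p(d))$ is computed by the Gersten complex of Milnor $K$-groups mod $p$. Since $X$ has dimension $d$, comparing hypercohomology spectral sequences gives $H^i_\M(X,\Z/p(d))\cong H^{i-d}_\Zar(X,\nu_1(d))$, while $H^{i-d}_\et(X,\nu_1(d))$ is known from part 1). I would then need to compare Zariski and étale cohomology of $\nu_1(d)$. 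Alternatively, and probably more easily, I would use that $H^{2d}_\M(X,\Z/p(d))=\CH_0(X)/p=\Z/p$ by Roitman-type divisibility of $\CH_0$ of degree $0$. For $i<2d$, the groups are $H^{i-d}_\Zar(X,\Omega^d_{\log})$, which I expect to be handled via Kato's duality or by the injectivity Lemma \ref{inject} combined with finiteness. Identifying these Zariski groups with the étale ones, or otherwise showing independence of $\K$, is the step I would expect to need the most care.
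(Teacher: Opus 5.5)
Your part 1) and the weight $0$ and $1$ cases of part 2) follow the paper. Part 1) uses Milne's representability together with $U^i(X,\nu_1(n))=0$ for $n=0,d$ (Proposition~\ref{n=d}). Weight $1$ uses the coefficient sequence and divisibility of $\K^\times$ and $\Pic^0(X_\K)$, which reduces everything to $\Pic(X_\K)[p]$ and $\NS(X_\K)/p$.

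There is one slip there. The inference ``$\Pic^0$ is an extension of an abelian variety by a unipotent group, hence divisible'' is invalid, because a nontrivial smooth unipotent group has $\K$-points killed by a power of $p$. What you actually need, and what holds because $X$ is smooth and proper, is that $\Pic^0_{X,\mathrm{red}}$ is an abelian variety; its $\K$-points are $\Pic^0(X_\K)$.

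The genuine gap is weight $d$ in degrees $d<i<2d$. You correctly identify $H^i_\M(X_\K,\Z/p(d))$ with $H^{i-d}_\Zar(X_\K,\nu_1(d))$. You also dispose of $i=d$ (global sections agree) and $i=2d$ (degree-zero $0$-cycles are divisible). For the middle degrees, however, nothing is proved.

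\begin{itemize}
\item Your claim that $\Z/p(d)_\M\to R\epsilon_*\Z/p(d)_\et$ is an isomorphism in degrees $\le d+1$ is not what Bloch--Kato--Gabber/Geisser--Levine give with mod $p$ coefficients. They give $\Z/p(d)_\M\simeq\tau_{\le d}R\epsilon_*\Z/p(d)_\et$. The extra degree is exactly the vanishing of $R^1\epsilon_*\nu_1(d)$, which is the whole content of the step, and you neither justify nor use it.
\item The fallback ``Lemma~\ref{inject} combined with finiteness'' fails. That lemma only gives injectivity of base change, and you have no independent source of finiteness or surjectivity for these Zariski groups.
\item ``Kato's duality'' is not specified.
\end{itemize}

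The paper closes this step in one line: Zariski and \'etale motivic cohomology agree in weight $d=\dim X$ over an algebraically closed field (the comparison theorem it cites), so part 1) applies directly. Concretely, this means $R^q\epsilon_*\nu_1(d)=0$ for $q\ge1$ on $X_\K$.
\begin{itemize}
\item For $q\ge2$ the vanishing is automatic, since affine schemes of characteristic $p$ have $p$-cohomological dimension at most $1$.
\item For $q=1$, use the Gersten resolution of $R^1\epsilon_*\nu_1(d)$. Its terms are $H^1(k(x),\nu_1(d-c))$ for $x$ of codimension $c$, and they vanish because $k(x)$ has transcendence degree $d-c$ over the algebraically closed field $\K$. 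For $d-c\le 1$ this is Artin--Schreier and Tsen; in general it is Kato's bound on $p$-dimension.
\end{itemize}
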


\proof
1) For $n=0$ this is the proper base change theorem. For $n=d$, 
it follows from Proposition \ref{n=d} that 
$\underline H^i_\et(X_\K,\Z/p(n))$
is the perfection of a finite \'etale group scheme.

2) For $n=0$ the only non-vanishing group is 
$H^0_\M(X_\K,\Z/m)=(\Z/m\Z)^{\pi_0(X_\K)}$,
which does not depend on $\K$.

For $n=1$, since $\K^\times$ and $\Pic^0(X_\K)$ are divisible, it follows
from the coefficient sequence that for any $m$, 
$H^1_M(X_\K,\Z/m(1))\cong \Pic(X_\K)[m]$
and $H^2_M(X_\K,\Z/m(1))\cong \NS(X_\K)/m$. 
These groups do not depend on $\K$.

The result in degree $n=d$ follows from (1), because in this case
the Zariski and \'etale cohomology agree \cite[Thm. 3.1]{ichannals}.
\proofend

Properness is essential here because Artin-Schreier
theory shows that $H^i_\et(\A^1_\K,\Z/p)$ contains an infinite
dimensional $\K$-vector space.

For general weights, the $p$-part of the \'etale theory does depend on $\K$.
For example, for a supersingular abelian surface or K3-surface, 
$$H^2_\et(X_\K,\Z/p(1))\twoheadrightarrow \Br(X_\K)\cong \K,$$
and the kernel $\NS(X_\K)/p$ does not depend on $\K$. 
We pose the remaining undecided case as a question:

\begin{question}\label{Q1}
Let $X/k$ be smooth and proper over an algebraically
closed field of characteristic $p$. Are the groups
$H^i_\M(X_\K,\Z/p(n))$ independent of the algebraically closed field $\K$, 
and are they finite?
\end{question}

\subsection*{Chow groups of threefolds}
The first groups where we do not know independence of $\K$ or finiteness,
and which are not covered by the above results, are
$CH^2(X)[m]$ and $CH^2(X)/m$ for threefolds. 
For any $m$, we have an exact sequence 
\begin{multline}\label{dece}
0\to H^3_\M(X_\K,\Z/m(2))\to H^3_\et(X_\K,\Z/m(2))
\to  H^0_\Zar(X_\K,\tau_{>2}R\epsilon_* \Z/m(2))\\
\to H^4_\M(X_\K,\Z/m(2))\to H^4_\et(X_\K,\Z/m(2))\to\cdots
\end{multline}
for $\epsilon: X_\et \to X_\Zar$ the change of topology morphism.
Away from the characteristic, $H^3_\et(X_\K,\Z/m(2))$ is finite
and does not depend on $\K$ by Theorem \ref{fincoef}, 
hence \eqref{dece} together with Lemma \ref{inject} shows
that the same is true for $H^3_\M(X_\K,\Z/m(2))$. Then the 
coefficient sequence 
$$0\to H^3_\M(X_\K,\Z(2))/m \to  H^3_\M(X_\K,\Z/m(2))
\to CH^2(X)[m]\to 0$$
and Lemma \ref{inject} show that $CH^2(X_\K)[m]$ is also 
finite and does not depend on $\K$.

At the characteristic, this argument does not show that 
$CH^2(X)[p^r]$ is finite, because 
$H^3_\et(X_\K,\Z/p^r(2))=H^1_\et(X_\K,\nu_r(2))$ has unipotent
part $U^1(X,\nu_r(2))$. 
However, this unipotent
part could possibly map to $H^0_\Zar(X_\K,R^3\epsilon_* \Z/p^r(2))$
in the sequence \eqref{dece}, so that we cannot determine with our
methods if $CH^2(X)[p^r]$ depends on the algebraically closed field, 
or if it is infinite. 

\medskip

Schoen gave examples where $CH^2(X)/m$
is infinite in characteristic $0$
\footnote{No such example is known in characteristic $p$.}:

\begin{theorem}\cite[Thm. 0.2]{schoen}\label{schoen}
Let $\K$ be an algebraically closed field of characteristic $p\not=3$
and let $E\subset {\mathbb P}^2_\Z$ be defined by the equation 
$x_0^3+x^3_1+ x^3_2= 0$. Let $l$ be a prime number. If $p>0$,
then $CH^2(E^3_\K)/l$ is finite. If $p=0$
and $l \equiv 1 \bmod 3$, then $CH^2(E^3_\K)/l$ is infinite.
\end{theorem}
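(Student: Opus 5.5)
The plan is to first reduce to a countable base field using rigidity results already stated, and then treat the two characteristics separately. Write $X=E^3_\K$, which has dimension $3$. By the Beilinson--Soulé type vanishing $H^i_\M(X,\Z(n))=0$ for $i>2n$, we have $H^5_\M(X_\K,\Z(2))=0$. The coefficient sequence then gives
$$CH^2(X_\K)/l\cong H^4_\M(X_\K,\Z/l(2)).$$
For $l\neq p$, Theorem \ref{fincoef} 2) (Suslin rigidity) shows that the right-hand side is independent of $\K$. So we may assume $\K=\bar\F_p$ if $p>0$, and $\K=\bar\Q$ if $p=0$. (If $l=p>0$, one would instead appeal to the characteristic-$p$ argument below directly over $\K$; I expect Schoen's statement to be essentially about $l\neq p$.)

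\emph{Case $p>0$.} Here $E_{\bar\F_p}$ is either supersingular or ordinary with CM by $\Z[\zeta_3]$. Hence the motive of $E^3$ is a sum of Tate twists and of pieces of $h^1(E)^{\otimes j}$, and so it is of abelian type and finite dimensional in Kimura's sense. For such motives over finite fields the Tate conjecture holds, since all Tate classes on powers of a CM or supersingular elliptic curve are Lefschetz classes. Combined with Kimura finite dimensionality, this should give that rational and numerical equivalence agree on $E^3_{\F_q}$. Passing to the colimit over $\F_q\subset\bar\F_p$, it follows that $CH^2(X_{\bar\F_p})\otimes\Q$ is the finite-dimensional space of numerical classes. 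Writing $CH^2(X_{\bar\F_p})=\colim CH^2(X_{\F_q})$, each term is an extension of a finitely generated group by torsion. The torsion of $CH^2$ over $\bar\F_p$ is, by Colliot-Thélène--Sansuc--Soulé via the Merkurjev--Suslin injection into $H^3_\et(X,\Q_l/\Z_l(2))$, of cofinite type: a divisible group $(\Q_l/\Z_l)^r$ plus a finite group. Thus $CH^2(X_{\bar\F_p})$ modulo a divisible subgroup is finitely generated, and $CH^2/l$ is finite.

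\emph{Case $p=0$.} Here we work over number fields $k\subset\bar\Q$ containing $\zeta_3$. I would use the $l$-adic Abel--Jacobi map on homologically trivial cycles,
$$CH^2(X_k)_{\hom}\to H^1\big(k,H^3_\et(X_{\bar\Q},\Z_l(2))\big),$$
which is compatible with reduction mod $l$. The condition $l\equiv1\bmod 3$ means $\zeta_3\in\Z_l$, so $H^3$ splits into eigenspaces for the $\mu_3^3$-action. One of these eigenspaces, namely $H^1(E)^{\otimes3}$ with a suitable character, is a rank one Galois module $\Z_l(\chi)$ given by a Hecke character. Its Galois cohomology over the tower of ray class fields generated by torsion points has unbounded mod $l$ rank. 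The cycles I would take are the images of the diagonal-type curves $E\to E^3$, $x\mapsto(x,\zeta x+a,\zeta^2x+b)$, translated by torsion points $a,b$ defined over larger and larger fields, and projected to that eigenspace.

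The key step, and the main obstacle, is showing that infinitely many of these cycles remain independent in $CH^2(X_{\bar\Q})/l$ after passing to the colimit over all number fields. Their Abel--Jacobi classes must give an infinite subgroup of $\colim_k H^1(k,\F_l(\chi))$ that is not killed by the restriction maps. I would try to compute the Abel--Jacobi images explicitly as Kummer-type classes, namely Galois cohomology classes attached to torsion points via the CM structure, and show that they ramify at infinitely many distinct primes. Classes ramified at distinct primes stay independent under restriction to finite extensions unramified at those primes. This would give infinitely many independent elements of $CH^2(X_{\bar\Q})/l$.
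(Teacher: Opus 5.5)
The paper gives no proof of this statement; it only cites Schoen.

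\textbf{Positive characteristic.} For $l\neq p$ your argument is essentially fine. Suslin rigidity reduces to $\bar\F_p$. There, finite rank of $CH^2\otimes\Q$ together with cofinite type of the $l$-primary torsion gives finiteness of $CH^2/l$.

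A few corrections and simplifications:
\begin{itemize}
\item You do not need, and have not justified, that $CH^2$ is finitely generated modulo a divisible subgroup. The prime-to-$l$ torsion drops out of $CH^2/l$, and a torsion-free group $B$ of rank $r$ satisfies $|B/lB|\le l^r$.
\item The Kimura/Tate/Kahn input can be replaced by Soul\'e's theorem quoted in the paper, since $2=d-1$ for $d=3$.
\item The vanishing $H^5_\M(X,\Z(2))=CH^2(X,-1)=0$ is trivial; it is not Beilinson--Soul\'e vanishing.
\item For $l=p$, your suggestion to run the same argument directly over $\K$ does not work. That argument uses finite fields and $l$-adic cohomology, and $p$-primary torsion can have unipotent contributions. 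The statement should be read with $l\neq p$, which is how the paper uses it.
\end{itemize}

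\textbf{Characteristic $0$.} Here there is a genuine gap, and it is not just a hard computation: the invariant you propose is identically zero. Let $M$ be a finite Galois module. Restrict a continuous cocycle to the open subgroup acting trivially on $M$; it becomes a continuous homomorphism, which vanishes on a smaller open subgroup. So every class dies in a finite extension, and $\colim_k H^1(k,M)=0$ over all number fields $k\subset\bar\Q$. With $\Z_l$-coefficients the colimit is $l$-divisible, so the induced map on $CH^2(X_{\bar\Q})/l$ is again zero.

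Consequently, no choice of torsion-translated graph cycles and no ramification analysis can produce a nonzero element of $CH^2(E^3_{\bar\Q})/l$ this way. The colimit includes exactly the ramified extensions that kill your classes.

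What you need is an invariant that survives over an algebraically closed field. Consider the Bloch--Ogus sequence (cf. \eqref{dece})
$H^3_\et(X,\Z/l(2))\to H^0(X,\mathcal H^3(\Z/l(2)))\to CH^2(X)/l\to H^4_\et(X,\Z/l(2))$.
Its outer terms are finite over $\bar\Q$. So infinitude of $CH^2(E^3_{\bar\Q})/l$ amounts to exhibiting infinitely many independent unramified degree-$3$ classes on the function field of $E^3_{\bar\Q}$, modulo the image of $H^3_\et$. This geometric input is the actual content of Schoen's theorem, and it is missing from your plan.
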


The sequence \eqref{dece} shows that this implies that
$H^0_\Zar(X_\K,\tau_{>3}R\epsilon_* \Z/l(2))$ is then also infinite.

\begin{remark}
Schoen's example implies that there is no analogue of the proper
base-change theorem for motivic cohomology: 

Let $l\equiv 1\bmod 3$ and $p\not=3$, and let $V$ be the strict henselization
of $\Z$ at $p$ with residue field $k=\bar \F_p$ and quotient field $K$.
For any finite extension field $K'$  of $K$, the normalization $V'$
of $V$ in $K'$ is another strict henselian discrete valuation ring. 
We claim that the pull-back map to the special fiber
$H^4_\M(E^3_{V'},\Z/l(2))\to H^4_\M(E^3_k,\Z/l(2))$
can not be an isomorphism for all $K'$. Indeed,
if it was, then in the colimit over $K'$, 
$$CH^2(E^3_k)/l\stackrel{\sim}{\to}
H^4_\M(E^3_k,\Z/l(2))\stackrel{\sim}{\leftarrow}\colim_{K'}H^4_\M(E^3_{V'},\Z/l(2))$$ 
surjects onto
$$\colim_{K'}H^4_\M(E^3_{K'},\Z/l(2))\cong 
H^4_\M(E^3_{\bar \Q},\Z/l(2))\cong CH^2(E^3_{\bar \Q})/l.$$
However the former group is finite since $CH^2(E^3_k)[l]$ is finite by the above discussion, 
and $CH^2(E^3_k)$ is the direct sum of a finitely generated group and a torsion group by 
\cite[Thm. 5]{soule}, whereas the latter group is infinite by Schoen's theorem.
\end{remark}

\section{The fiber of Frobenius}
Given a smooth and proper variety over a finite field $\F_q$ with 
algebraic closure $\F$, we can ask if the fiber of Frobenius will not
change if we base change to a larger algebraically closed field $\K$. 
We redefine Weil-\'etale cohomology with coefficients in $A$ to allow
the extra parameter:
$$H^i_W(X_\K,A(n))= H^i\Bigl(\fiber F-1: 
R\Gamma_\et(X_\K,A(n))\to R\Gamma_\et(X_\K,A(n))\Bigr).$$
Similarly we consider the Frobenius fixed points on motivic cohomology:
$$H^i_F(X_\K,\Z(n))= H^i\Bigl(\fiber F-1: 
R\Gamma_\M(X_\K,\Z(n))\to R\Gamma_\M(X_\K,\Z(n))\Bigr).$$
These groups and their relationship  have been studied in \cite{ichfrob}.
By construction we have short exact sequences
$$ 0\to H^{i-1}_\et(X_\K,A(n))_{F-1}\to H^i_W(X_\K,A(n))\to H^i_\et(X_\K,A(n))^{F-1}\to 0$$
and similarly for $H^i_F(X_\K,\Z(n))$.
In particular, we have short exact sequences
$$ 0\to H^{2n-1}_\M(X_\K,\Z(n))_{F-1}\to H^{2n}_F(X_\K,\Z(n))
\to CH^n(X_\K)^{F-1}\to 0, $$
an isomorphism
$$CH^n(X_\K)_{F-1}\cong H^{2n+1}_F(X_\K,\Z(n)),$$
and $H^i_F(X_\K,\Z(n))$ vanishes for $i>2n+1$.

\subsection*{Finite coefficients}
Our first result is that with torsion coefficients,
Weil-\'etale cohomology does not depend on $\K$.

\begin{theorem}\label{frobmodm}
For all $i,n\geq 0$, and $m\geq 1$, the groups $H^i_W(X_\K,\Z/m(n))$ 
are finite and do not depend on $\K$.
\end{theorem}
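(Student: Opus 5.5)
The plan is to reduce, via the short exact sequence
$$0\to H^{i-1}_\et(X_\K,\Z/m(n))_{F-1}\to H^i_W(X_\K,\Z/m(n))\to H^i_\et(X_\K,\Z/m(n))^{F-1}\to 0,$$
to showing that the kernel and cokernel of $F-1$ on $H^j_\et(X_\K,\Z/m(n))$ are finite and independent of $\K$ for every $j$. Both the short exact sequence and the base change maps are compatible with extensions $\K\subset\K'$, so the five lemma then gives the statement for $H^i_W$, including finiteness. By writing $m=m'p^r$ with $p\nmid m'$ and splitting $\Z/m(n)\cong\Z/m'(n)\oplus\Z/p^r(n)$ compatibly with $F$, we can treat the two parts separately.

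\emph{Away from $p$.} By Theorem \ref{fincoef}(1), $H^j_\et(X_\K,\Z/m'(n))$ is finite and the base change map from $\F$ to $\K$ is an isomorphism. This map commutes with $F$, because the arithmetic Frobenius is functorial in $S$. Hence the kernel and cokernel of $F-1$ are literally those over $\F$, and they are finite.

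\emph{At $p$.} Here $\Z/p^r(n)\cong\nu_r(n)[-n]$ in the \'etale topology (Geisser--Levine), so we need to understand $H^{j-n}_\et(X_\K,\nu_r(n))$. By Milne's Lemma 1.8 the functor $T\mapsto H^{j-n}_\et(X_T,\nu_r(n))$ on $(Pf/\F_q)_\et$ is represented by an object $G=\underline H^{j-n}_\et(X,\nu_r(n))$ of $\GG$ defined over $\F_q$, since $X$ itself is defined over $\F_q$. Since $\K$ is perfect, $H^{j-n}_\et(X_\K,\nu_r(n))=G(\K)$. Next we check that the cohomological Frobenius $F$, which is induced by $\id_X\times\varphi_\K$, agrees with the action of $F$ on $G(\K)$ described in the Lang--Steinberg subsection, namely $s\mapsto s\circ\varphi_\K$. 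This is just the functoriality of the representing object with respect to the morphism $\varphi_\K:\Spec\K\to\Spec\K$ over $\F_q$. Corollary \ref{GGobjects} then shows that the kernel and cokernel of $F-1$ on $G(\K)$ are finite and independent of $\K$.

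I expect the main obstacle to be the compatibility in the $p$-part. We must make sure that the representability is over the base $\F_q$, and not merely over $\F$, so that the Frobenius $\varphi_\K$ is a morphism in $Pf/\F_q$ and induces the semilinear action on $G$. We must also make sure that the identification $H^j_\et(X_\K,\Z/p^r(n))\cong G(\K)$ is natural in $\K$, including with respect to $\varphi_\K$. If Milne's lemma is only stated over an algebraically closed base, I would first try applying it over $k=\F_q$, which is perfect, so that $(Pf/\F_q)$ contains all the $\K$.
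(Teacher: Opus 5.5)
Your proposal is correct and follows the same route as the paper. You split $m$ into its prime-to-$p$ and $p$-power parts, use Theorem \ref{fincoef} away from $p$, and at $p$ combine $\Z/p^r(n)\cong\nu_r(n)[-n]$ with Milne's representability and Corollary \ref{GGobjects}, while making explicit two points the paper leaves implicit: the reduction through the short exact sequence for $H^i_W$, and the compatibility of the cohomological Frobenius with the action on the representing group over the perfect base $\F_q$.
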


\proof
We can consider the case that $m$ is prime to $p$ and that
$m$ is a power of $p$ separately. 
If $p\not|m$, then \'etale motivic cohomology is finite and independent 
of $\K$ even before taking the Frobenius cone by Theorem \ref{fincoef}. 
For the case that $m$ is a power of $p$, 
the result for follows from
Corollary \ref{GGobjects} because $\Z/p^r(n)\cong \nu_r(n)[-n]$. 
\qed

\begin{corollary}
For all $i>2n+1$, and $m$, the groups $H^i_W(X_\K,\Z(n))$ do not depend on 
$\K$ and are of cofinite type.
\end{corollary}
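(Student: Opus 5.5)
The plan is to relate integral Weil-\'etale cohomology in high degrees to Weil-\'etale cohomology with torsion coefficients, and then apply Theorem \ref{frobmodm}. The starting observation is that $\Z(n)$ restricted to $X_\K$ has rationally trivial \'etale cohomology in high degrees. More precisely, by Beilinson--Soul\'e type vanishing for \'etale motivic cohomology, $H^i_\et(X_\K,\Q(n))=H^i_\M(X_\K,\Q(n))=0$ for $i>2n$, using that rationally the Zariski and \'etale theories agree and that motivic cohomology vanishes above degree $2n$. Hence in the fiber of $F-1$ we get $H^i_W(X_\K,\Q(n))=0$ for $i>2n+1$. Note that degree $2n+1$ of the fiber involves $H^{2n}(\Q)_{F-1}$, so the bound must be chosen carefully: $H^i_W(\Q)$ vanishes for $i\ge 2n+2$, and in degree $2n+1$ only the coinvariant piece of $H^{2n}(\Q)$ could survive.

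Next use the coefficient triangle $\Z(n)\to\Q(n)\to\Q/\Z(n)$. This is compatible with $F$ and gives a triangle of the fibers of $F-1$. It yields $H^{i-1}_W(X_\K,\Q/\Z(n))\cong H^i_W(X_\K,\Z(n))$ whenever $H^{i-1}_W(\Q)=H^i_W(\Q)=0$. That holds for $i-1\ge 2n+2$, i.e. $i\ge 2n+3$. For $i=2n+2$ I would work out the sequence more carefully: the piece $H^{2n+1}_W(\Q)=H^{2n}_\et(X_\K,\Q(n))_{F-1}$, which is $CH^n(X_\K)_\Q$ coinvariants, might intervene. To handle it, I would argue that $CH^n(X_\K)_\Q\xrightarrow{F-1}$ has coinvariants that are uniquely divisible, or simply restrict to the range $i>2n+1$ where only $H^{\geq 2n+1}(\Q)$ matter. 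Here is the obstacle: $H^{2n+1}_W(\Q)$ may be nonzero, and its image in $H^{2n+1}_W(\Q/\Z)$ is a divisible subgroup, which affects $H^{2n+2}_W(\Z)$ as a cokernel. Since the statement only claims independence and cofinite type, I would show that this image is independent of $\K$ as well. Alternatively, and I expect this is what the author intends, one can reinterpret: for $i>2n+1$, $H^i_W(\Z(n))\cong H^{i-1}_W(\Q/\Z(n))$ modulo the image of a uniquely divisible group, and then use that a quotient of a torsion group by a divisible subgroup that is itself independent of $\K$ stays controlled.

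Finally, write $H^{i-1}_W(X_\K,\Q/\Z(n))=\colim_m H^{i-1}_W(X_\K,\Z/m(n))$, since \'etale cohomology of the proper $X_\K$ commutes with filtered colimits of coefficients, and so does the fiber of $F-1$. By Theorem \ref{frobmodm} each term is finite and independent of $\K$, and the transition maps are compatible with base change. Hence the colimit is independent of $\K$. It is of cofinite type because its $m$-torsion is a quotient of the finite group $H^{i-1}_W(\Z/m(n))$, via the Bockstein sequence $H^{i-2}_W(\Q/\Z)\to H^{i-1}_W(\Z/m)\to H^{i-1}_W(\Q/\Z)[m]\to0$, which can be checked prime by prime. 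The main difficulty I expect is the boundary degree $i=2n+2$ discussed above, namely controlling the rational contribution in the fiber sequence.
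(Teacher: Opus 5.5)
Your argument is complete for $i\ge 2n+3$. There $H^{i-1}_W(X_\K,\Q(n))=H^{i}_W(X_\K,\Q(n))=0$, so $H^i_W(X_\K,\Z(n))\cong \colim_m H^{i-1}_W(X_\K,\Z/m(n))$, and Theorem \ref{frobmodm} together with your Bockstein argument finishes. But the statement includes $i=2n+2$, and there you only name the obstacle. In that degree $H^{2n+2}_W(X_\K,\Z(n))$ is the cokernel of
\[
H^{2n+1}_W(X_\K,\Q(n))\cong(\CH^n(X_\K)_\Q)_{F-1}\longrightarrow H^{2n+1}_W(X_\K,\Q/\Z(n)).
\]
You say you ``would show'' that its image $I_\K$ is independent of $\K$, but you give no argument. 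Your alternative formulation (a quotient by a divisible subgroup ``that is itself independent of $\K$'') assumes exactly this point. You also cannot read it off from the source. $I_\K$ can be nonzero, and whether the group $H^{2n+1}_F(X_\K,\Q(n))\cong(\CH^n(X_\K)_\Q)_{F-1}$ depends on $\K$ is part of the open Question \ref{rigid?}. So the case $i=2n+2$ is a genuine gap.

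The missing ingredient is the injectivity Lemma \ref{inject}, which makes it unnecessary to identify $I_\K$ at all. Since $H^i_W(X_\K,\Q(n))=0$ for all $i>2n+1$, the group $H^i_W(X_\K,\Z(n))$ is torsion. Hence the map $H^{i-1}_W(X_\K,\Q/\Z(n))\to H^i_W(X_\K,\Z(n))$ is surjective for every $\K$. Its source is independent of $\K$, so the commutative square with base change shows that $H^i_W(X_\F,\Z(n))\to H^i_W(X_\K,\Z(n))$ is surjective; in degree $2n+2$ this just says $I_\F\subseteq I_\K$. Lemma \ref{inject} then gives injectivity, so $I_\K=I_\F$. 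It applies to the fiber of $F-1$ on \'etale motivic cohomology by Proposition \ref{mccolim} and the five lemma. This is the paper's proof, and it treats all $i>2n+1$ uniformly. Cofinite type follows since a quotient of a group of cofinite type is again of cofinite type.

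A minor point: the vanishing of $H^i_\M(X_\K,\Q(n))$ for $i>2n$ is just $\CH^n(X_\K,j)=0$ for $j<0$, not Beilinson--Soul\'e vanishing.
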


\proof 
By the coefficient sequence $H^{i-1}_W(X_\K,\Q/\Z(n))$ surjects onto 
$H^{i}_W(X_\K,\Z(n))$ because the latter is torsion for $i>2n+1$. 
Comparing field extensions, surjectivity of
$H^{i}_W(X_{\F},\Z(n))\to H^{i}_W(X_\K,\Z(n))$ then follows from Theorem 
\ref{frobmodm}, and injectivity follows from Lemma \ref{inject}.
\proofend

\begin{corollary}
The Frobenius invariants $\Br(X_\K)^{F-1}$ and coinvariants $\Br(X_\K)_{F-1}$
of the Brauer group do not depend on $\K$.
\end{corollary}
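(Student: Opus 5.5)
We prove that $\Br(X_\K)^{F-1}$ and $\Br(X_\K)_{F-1}$ do not depend on $\K$. The plan is to realize the Brauer group as the cokernel of a map between Frobenius modules, use the preceding Corollary to control the top-degree part, and then use Weil-étale cohomology with coefficients $\Q/\Z(1)$ to control the invariants.

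First, the coefficient sequence for $\Q/\Z(1)$, together with the divisibility of $\Pic^0(X_\K)$ (and of $\K^\times$), gives a short exact sequence of $F$-modules
$$0\to \NS(X_\K)\otimes\Q/\Z\to H^2_\et(X_\K,\Q/\Z(1))\to \Br(X_\K)\to 0 .$$
Here $\Br(X_\K)$ means the cohomological Brauer group $H^2_\et(X_\K,\G_m)$, which is torsion, so it equals $H^3_\et(X_\K,\Z(1))$. The left term is independent of $\K$: the Néron–Severi group is invariant under extensions of algebraically closed fields, and the Frobenius action on it is the same. For a torsion $F$-module $M$, the invariants and coinvariants are computed from the two-term complex $M\xrightarrow{F-1}M$. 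The long exact sequence of this complex, applied to the above sequence, gives
$$\cdots\to H^2_W(X_\K,\Q/\Z(1))\to \Br(X_\K)^{F-1}\to (\NS\otimes\Q/\Z)_{F-1}\to H^3_W(X_\K,\Q/\Z(1))\to \Br(X_\K)_{F-1}\to 0 .$$

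Second, take the colimit over $m$ in Theorem~\ref{frobmodm}. Since filtered colimits commute with cohomology, the groups $H^i_W(X_\K,\Q/\Z(1))$ do not depend on $\K$. All maps in the sequence are compatible with the base change from $\F$ to $\K$. The coinvariants $\Br(X_\K)_{F-1}$ are the cokernel of a map between two groups independent of $\K$, hence are independent of $\K$. Alternatively, $\Br(X_\K)_{F-1}=H^4_W(X_\K,\Z(1))$, and $4>2\cdot1+1$, so the preceding Corollary applies directly.

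For the invariants, the five lemma does not apply directly, because the term preceding $H^2_W(X_\K,\Q/\Z(1))$ is $(\NS\otimes\Q/\Z)^{F-1}$, and the term after $(\NS\otimes\Q/\Z)_{F-1}$ is $H^3_W$. Both are independent of $\K$, so the five-term window
$$(\NS\otimes\Q/\Z)^{F-1}\to H^2_W\to \Br^{F-1}\to (\NS\otimes\Q/\Z)_{F-1}\to H^3_W$$
has its four outer terms independent of $\K$. The five lemma then gives that $\Br(X_\F)^{F-1}\to\Br(X_\K)^{F-1}$ is an isomorphism. Injectivity alone also follows from Lemma~\ref{inject}, since étale motivic cohomology satisfies its hypotheses.

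The main obstacle is making sure every identification is natural in $\K$. Two points need care: that $\NS(X_\F)\to\NS(X_\K)$ is a Frobenius-equivariant isomorphism, and that the connecting maps commute with base change. Both hold because all the constructions are functorial for $X_\F\to X_\K$, and $F$ commutes with this base change. If the identification of $\Br$ with $H^3_\et(X_\K,\Z(1))$ causes trouble, I would instead work with the sequence $0\to H^3_\et(X_\K,\Z(1))_{\rm tors}\to\cdots$, using $H^3_\et(X_\K,\Q(1))=0$.
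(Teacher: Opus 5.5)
Your strategy is the paper's: apply the snake lemma for $F-1$ to $0\to\NS(X_\K)\otimes\Q/\Z\to H^2_\et(X_\K,\Q/\Z(1))\to\Br(X_\K)\to 0$, then the five lemma. But the exact sequence you write down is not the one this produces, and the argument fails at exactly that substitution. The snake lemma for the two-term complexes $M\xrightarrow{F-1}M$ involves the kernel and cokernel of $F-1$ on the single group $H^2_\et(X_\K,\Q/\Z(1))$, not the Weil-\'etale groups. By construction, $H^2_W(X_\K,\Q/\Z(1))$ is an extension of $H^2_\et(X_\K,\Q/\Z(1))^{F-1}$ by $H^1_\et(X_\K,\Q/\Z(1))_{F-1}$. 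Likewise, $H^3_W(X_\K,\Q/\Z(1))$ is an extension of $H^3_\et(X_\K,\Q/\Z(1))^{F-1}$ by $H^2_\et(X_\K,\Q/\Z(1))_{F-1}$. So your sequence is not exact. For example, the cokernel of $(\NS\otimes\Q/\Z)_{F-1}\to H^3_W(X_\K,\Q/\Z(1))$ is an extension of $H^3_\et(X_\K,\Q/\Z(1))^{F-1}\cong H^4_\et(X_\K,\Z(1))^{F-1}$ by $\Br(X_\K)_{F-1}$, not $\Br(X_\K)_{F-1}$ itself. The same mistake breaks your alternative $\Br(X_\K)_{F-1}=H^4_W(X_\K,\Z(1))$. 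The right side is an extension of $H^4_\et(X_\K,\Z(1))^{F-1}=H^3_\et(X_\K,\G_m)^{F-1}$ by $\Br(X_\K)_{F-1}$, and this invariant group need not vanish: for an abelian surface $A/\F_q$ its $\ell$-part is nonzero whenever $\ell\neq p$ divides $|A(\F_q)|$. Hence, as you set it up, independence of $H^i_W$ from Theorem~\ref{frobmodm} does not pass to the Brauer group.

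What is missing is that the kernel and the cokernel of $F-1$ on $H^2_\et(X_\K,\Q/\Z(1))$ are \emph{separately} independent of $\K$; this is the input the paper uses. You can get it from the proof of Theorem~\ref{frobmodm}. For $p\nmid m$ the group $H^2_\et(X_\K,\Z/m(1))$ is itself independent of $\K$. For $m=p^r$ it is $H^1_\et(X_\K,\nu_r(1))$, the $\K$-points of an object of $\GG$, so Corollary~\ref{GGobjects} applies. You can also get it from the statement of the theorem. In $0\to H^{i-1}_\et(X_\K,\Z/m(1))_{F-1}\to H^i_W(X_\K,\Z/m(1))\to H^i_\et(X_\K,\Z/m(1))^{F-1}\to 0$, the middle comparison map is an isomorphism and the right one is injective by Lemma~\ref{inject}. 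That forces both outer maps to be isomorphisms; use $i=2$ and $i=3$. Pass to the colimit over $m$, and run your five-term window with $H^2_\et(X_\K,\Q/\Z(1))^{F-1}$ and $H^2_\et(X_\K,\Q/\Z(1))_{F-1}$ in place of $H^2_W$ and $H^3_W$; this gives exactly the paper's proof. Your $H^4_W(X_\K,\Z(1))$ route for the coinvariants can be rescued by the same diagram chase, using injectivity on $H^4_\et(X_\K,\Z(1))^{F-1}$ from Lemma~\ref{inject}.
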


\proof
Consider the self map $F-1$ on the short exact 
coefficient sequence
$$ 0\to \NS(X_\K)\otimes\Q/\Z\to H^2_\et(X_\K,\Q/\Z(1))\to \Br(X_\K)\to 0.$$
Since the kernel and cokernel of $F-1$ on the left and middle terms do not 
depend on $\K$, the Snake Lemma implies the same holds true for the Brauer group.
\proofend

For motivic cohomology, Suslin rigidity implies that away from the characteristic,
$H^i_F(X_\K,\Z/m(n))$ does not depend on $\K$ (but we don't know finiteness), 
and we have the following weaker version of question \ref{Q1}:

\begin{question}
Let $X/\F_q$ be smooth and proper. Are the groups
$H^i_F(X_\K,\Z/p(n))$ independent of $\K$, and are they finite?
\end{question}

\subsection*{Rational coefficients}
By Theorem \ref{frobmodm}, rigidity for Weil-\'etale cohomology 
reduces to rational coefficients. Moreover, with rational coefficient,
the Zariski and \'etale motivic cohomology agree, so that it suffices to 
consider one of them, say $H^i_F(X_\K,\Q(n))$.

\begin{question}\label{rigid?}
Let $X/\F_q$ be smooth and proper. Are the groups
$H^i_F(X_\K,\Q(n))$ independent of $\K$, and are they finite
dimensional?
\end{question}

Question \ref{rigid?} has an affirmative answer for $n=0,1$:

\begin{proposition}
For $n=0,1$, $ H^i_F(X_\K,\Z(n))$ does not depend on $\K$ and is of
finite rank.
\end{proposition}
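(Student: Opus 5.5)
The plan is to compute the motivic complexes $R\Gamma_\M(X_\K,\Z(n))$ explicitly for $n=0,1$ in terms of group schemes. Then the fiber of $F-1$ is determined by the short exact sequences
$$0\to H^{i-1}_\M(X_\K,\Z(n))_{F-1}\to H^i_F(X_\K,\Z(n))\to H^i_\M(X_\K,\Z(n))^{F-1}\to 0,$$
and it suffices to show that the kernel and cokernel of $F-1$ on each motivic cohomology group are finitely generated and independent of $\K$. Throughout, $X$ is smooth and proper over $\F_q$, and we may work one connected component at a time. After a finite extension of $\F_q$ permuting the components, $H^i_F$ decomposes accordingly, so assume $X_\K$ is connected.

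For $n=0$ we have $\Z(0)=\Z$, so the only nonzero group is $H^0_\M(X_\K,\Z)=\Z^{\pi_0(X_\K)}$. This group is independent of $\K$, since $\pi_0(X_\K)=\pi_0(X_\F)$, and $F$ acts on it through a permutation of the components. Hence $H^0_F$ and $H^1_F$ are the invariants and coinvariants of a permutation action on a finitely generated free group. These are finitely generated and independent of $\K$, and all other groups vanish.

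For $n=1$ we use $\Z(1)\simeq\G_m[-1]$. Thus $H^1_\M(X_\K,\Z(1))=\Gamma(X_\K,\G_m)=\K^\times$ (for $X_\K$ connected and proper) and $H^2_\M(X_\K,\Z(1))=\Pic(X_\K)$, and all other groups vanish. On $\K^\times$, the sequence \eqref{gmcase} shows that $F-1$ is surjective with kernel $\F_q^\times$.

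For $\Pic(X_\K)$ I would use the Picard scheme. $\Pic_{X/\F_q}$ is a commutative group scheme locally of finite type over $\F_q$. Since $X_\K$ has a $\K$-rational point and $\K$ is algebraically closed, $\Pic(X_\K)=\Pic_{X/\F_q}(\K)$. The Frobenius action $F$ on $\Pic(X_\K)$ agrees with the action on $\K$-points of the representing scheme, as explained in the paragraph on the Lang–Steinberg theorem. Corollary~\ref{locftlang} then shows that the kernel and cokernel of $F-1$ on $\Pic(X_\K)$ are independent of $\K$. For finite generation, filter by $0\to\Pic^0\to\Pic\to\NS\to0$. On $\Pic^0_{red}(\K)$, an abelian variety, the kernel of $F-1$ is finite and the cokernel is zero. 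The infinitesimal part contributes nothing on $\K$-points. On the finitely generated group $\NS(X_\K)=\NS(X_\F)$, the kernel and cokernel of $F-1$ are finitely generated. So $H^2_F$ and $H^3_F$ are finitely generated, and $H^1_F$ sits in
$$0\to \F_q^\times\to H^1_F\to 0\quad(\text{from } H^0_\M(X_\K,\Z(1))=0),$$
and $H^2_F$ is an extension of $\Pic(X_\K)^{F-1}$ by $(\K^\times)_{F-1}=0$.

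The independence of $\K$ has to hold for the canonical base change maps, not just abstractly. I would obtain it by comparing the long exact sequences for $\K\subset\K'$ and applying the five lemma; Lemma~\ref{inject} gives injectivity as a backup. The step I expect to need the most care is identifying $F$ on $\Pic(X_\K)$ with the Frobenius on $\K$-points of $\Pic_{X/\F_q}$, so that Corollary~\ref{locftlang} applies. This is where the compatibility $\varphi_G\circ s=s\circ\varphi_S$ is used, and where one must check that the representability $\Pic(X_\K)=\Pic_{X/\F_q}(\K)$ is natural in $\K$.
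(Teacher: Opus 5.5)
Your proof is correct and is essentially the paper's argument. For $n=0$ you use $\Z^{\pi_0(X_\K)}$; for $n=1$ you use $\Z(1)\simeq\G_m[-1]$, the sequence \eqref{gmcase} for $\K^\times$, and Corollary~\ref{locftlang} applied to $\Pic_{X/\F_q}$, and your extra filtration $\Pic^0\subset\Pic\twoheadrightarrow\NS$ only makes explicit the finite-generation (hence finite-rank) claim that the paper leaves implicit.
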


This was proved by Richarz-Scholbach \cite{RS} after inverting $p$.

\proof
For $n=0$, the only non-zero motivic cohomology group is 
$H^0_\M(X_\K,\Z(0))\cong \Z^{\pi_0(X_\K)}$
and this does not depend on $\K$ (even before taking the cone of Frobenius).
For $n=1$, we have  $H^1_\M(X_\K,\Z(1))\cong \K^\times$, hence
the result follows from \eqref{gmcase}. The only other non-vanishing
group is $H^2_\M(X_\K,\Z(1))\cong \Pic(X_\K)$. These are the
$\K$-rational points of the commutative group scheme $\Pic_X$
locally of finite type over $\F_q$, hence the result
follows from  Corollary \ref{locftlang}.
\proofend

\subsection*{Connection to the Beilinson-Parshin conjecture}
Parshin's conjecture is equivalent to the vanishing of 
$H^i_\M(X,\Q(n))=0$ for $i\not=2n$ and smooth and projective
$X$ over $\F_q$ or, equivalently, over $\F$. 
This is clearly wrong over bigger fields, as the example 
$H^1_\M(\K,\Z(1))=\K^\times$ shows. We propose the following
version of Parshin's conjecture over arbitrary algebraically
closed fields using Frobenius rigidity.

\begin{proposition} The following statements are equivalent:

1) For any smooth and projective $X$ over $\F_q$ and any 
algebraically closed field $\K$ containing $\F_q$, the map 
$$ F-1: \CH_{hom}^n(X_\K)_\Q\to \CH_{hom}^n(X_\K)_\Q$$
as well as the maps
$$F-1: H^i_\M(X_\K,\Q(n))\to H^i_\M(X_\K,\Q(n))$$
for $i\not=2n$ are isomorphisms. 

2) Parshin's conjecture holds, Beilinson's conjecture
that rational and homological
equivalence agree up to torsion over finite fields holds, and 
Question \ref{rigid?} has an affirmative answer. 
\end{proposition}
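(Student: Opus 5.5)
The plan is to reduce both directions to statements about the two-term long exact sequences defining $H^i_F(X_\K,\Q(n))$, and to compare with the case $\K=\F$. Over $\F$, each element of $H^i_\M(X_\F,\Q(n))$ or $\CH^n(X_\F)_\Q$ comes from some $X_{\F_{q^r}}$. Hence $F$ acts on it through a finite quotient: $F^r$ is the identity on classes defined over $\F_{q^r}$, up to the norm/pullback identification, after tensoring with $\Q$. So on these $\Q$-vector spaces $F-1$ is an isomorphism if and only if there are no nonzero $F$-invariants, i.e.\ no nonzero classes coming from $X$ itself. The group $H^i_F(X_\K,\Q(n))$ sits in
$$0\to H^{i-1}_\M(X_\K,\Q(n))_{F-1}\to H^i_F(X_\K,\Q(n))\to H^i_\M(X_\K,\Q(n))^{F-1}\to 0,$$
and in degrees $2n,2n+1$ the Chow group enters through $\CH^n(X_\K)_\Q$.

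For $2)\Rightarrow 1)$, I would first work over $\F$. Parshin's conjecture gives $H^i_\M(X_\F,\Q(n))=0$ for $i\neq 2n$. Beilinson's conjecture, combined with the fact that rational and numerical equivalence coincide rationally under Parshin over $\F_q$ (the Tate/Beilinson package), gives $\CH^n_{hom}(X_{\F_{q^r}})_\Q=0$ for all $r$, hence $\CH^n_{hom}(X_\F)_\Q=0$. So over $\F$ all the relevant groups vanish, and $H^i_F(X_\F,\Q(n))$ is computed by $\CH^n(X_\F)_\Q$ modulo homological equivalence alone; this is a subspace of $\ell$-adic cohomology, on which $F$ acts semisimply up to the Tate conjecture. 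The affirmative answer to Question~\ref{rigid?} says $H^i_F(X_\K,\Q(n))\cong H^i_F(X_\F,\Q(n))$. I would then compare the long exact sequences for $\K$ and $\F$, splitting $\CH^n(X_\K)_\Q$ as an extension of the image in cohomology (independent of $\K$ by rigidity of $\ell$-adic cohomology and Lemma~\ref{inject}) by $\CH^n_{hom}(X_\K)_\Q$. The five lemma and a snake-lemma argument should then force kernel and cokernel of $F-1$ on $H^i_\M(X_\K,\Q(n))$ for $i\neq 2n$, and on $\CH^n_{hom}(X_\K)_\Q$, to agree with those over $\F$, which are zero.

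For $1)\Rightarrow 2)$, I would apply 1) with $\K=\F$. There, $F-1$ being injective on a group whose elements are fixed by some $F^r$ forces the group to vanish: given $x$, the element $y=x+Fx+\dots+F^{r-1}x$ is $F$-fixed, hence $0$. Then $x=\frac1r\sum_j(1-F^j)x\in\operatorname{im}(F-1)$, and running the same argument over $\F_{q^r}$ (replacing $F$ by $F^r$) kills $x$. This yields Parshin and Beilinson. For arbitrary $\K$, 1) makes all terms other than the Chow quotient by homological equivalence acyclic for $F-1$. Hence $H^i_F(X_\K,\Q(n))$ is computed by $F-1$ on $\CH^n(X_\K)_\Q/\CH^n_{hom}$, which is independent of $\K$ and finite dimensional. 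This answers Question~\ref{rigid?}.

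The main obstacle is the step in $2)\Rightarrow 1)$ that extracts isomorphism of $F-1$ on the individual groups over $\K$ from rigidity of the fiber alone. Knowing that the cone is unchanged only controls kernels and cokernels modulo the contributions of adjacent degrees. I would handle this by using Lemma~\ref{inject} to compare groups over $\K$ and over $\F$ degree by degree, arguing inductively on $i$ from the top degree $2n+1$ downward. One must also ensure that the invariants and coinvariants of $F-1$ on the cycle-class image in degree $2n$ match exactly over $\K$ and $\F$; that comparison is what absorbs the remaining terms.
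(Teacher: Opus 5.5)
Your route is the paper's. Over $\F$ every class has a finite Frobenius orbit, so $F-1$ is bijective if and only if there are no invariants (Lemma~\ref{finiteorbits}). Parshin and Beilinson kill the relevant groups over $\F$, and one passes between $\F$ and $\K$ using rigidity of $\CH^n/\hom$ and a diagram chase. In $2)\Rightarrow1)$ you are more careful than the paper, which simply says that independence of $\K$ reduces everything to $\F$, and your last sentence names the right mechanism. No induction is needed:
\begin{itemize}
\item for $i\le 2n-2$, the vanishing of $H^i_F$ and $H^{i+1}_F$ suffices;
\item in degrees $2n$ and $2n+1$, compare $H^{2n}_F(X_\K,\Q(n))\twoheadrightarrow\CH^n(X_\K)_\Q^{F-1}\to((\CH^n/\hom)(X_\K)_\Q)^{F-1}$, and its coinvariant analogue, with the corresponding isomorphisms over $\F$, then apply the snake lemma.
\end{itemize}

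\textbf{The gap.} This comparison rests on the independence of $(\CH^n/\hom)(X_\K)_\Q$ from $\K$. You justify it ``by rigidity of $\ell$-adic cohomology and Lemma~\ref{inject}'', but both only give injectivity of $(\CH^n/\hom)(X_\F)\to(\CH^n/\hom)(X_\K)$. The real content is surjectivity: every cycle on $X_\K$ is homologically equivalent to a cycle defined over $\F$. Without it the identification in degree $2n+1$ breaks down. Then you cannot kill the coinvariants of $F-1$ on $\CH^n_{\hom}(X_\K)_\Q$ in $2)\Rightarrow1)$. Nor can you conclude in $1)\Rightarrow2)$ that $H^i_F(X_\K,\Q(n))$ is independent of $\K$. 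The paper supplies surjectivity through Theorem~\ref{chowrigid}: every connected component of $\Hilb_{X_\K/\K}$ is the base change of one over $\F$ and contains an $\F$-point, so every cycle on $X_\K$ is algebraically equivalent to one over $\F$. Alternatively, spread the cycle out over a smooth connected $\F$-variety, specialize at an $\F$-point, and use that the $\ell$-adic class is constant in the family.

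\textbf{Smaller points.}
\begin{itemize}
\item Your claim that $(\CH^n/\hom)(X_\K)_\Q$ is finite dimensional is unsupported, since a $\Q$-subspace of a finite-dimensional $\Q_\ell$-space need not be finite dimensional. The paper's proof is silent on this too.
\item The remark that rational and numerical equivalence agree ``under Parshin'' is not a known implication, and you do not need it. Beilinson's conjecture as stated in 2), applied to $X_{\F_{q^r}}$, directly gives $\CH^n_{\hom}(X_{\F_{q^r}})_\Q=0$.
\item In $1)\Rightarrow2)$ the averaging step is redundant. Replacing $F$ by $F^r$ tacitly uses 1) for $X_{\F_{q^r}}$ with the $q^r$-Frobenius; this is harmless if you regard $X_{\F_{q^r}}$ as an $\F_q$-scheme. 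The paper avoids the issue with the norm identification $H^i_\M(X,\Q(n))=H^i_\M(X_\F,\Q(n))^{F-1}$, so injectivity of $F-1$ over $\F$ gives Parshin and Beilinson for $X$ at once.
\end{itemize}
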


\begin{lemma}\label{finiteorbits}
Let $V$ be a $\mathbb{Q}$-vector space equipped with a linear
automorphism $F$. Assume that every $v\in V$ has a finite $F$-orbit.
Then the natural map $ V^{F-1} \rightarrow V_{F-1}$
is an isomorphism. 
\end{lemma}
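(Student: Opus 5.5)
The plan is to reduce everything to the finite-dimensional case, where the claim is elementary linear algebra, and then pass to the colimit. The natural map $V^{F-1}\to V_{F-1}$ is the composite of the inclusion $\ker(F-1)\subseteq V$ with the projection $V\to V/\operatorname{im}(F-1)$. So it suffices to show the decomposition
$$V=\ker(F-1)\oplus \operatorname{im}(F-1).$$
Injectivity of the map is the statement $\ker\cap\operatorname{im}=0$, and surjectivity is $\ker+\operatorname{im}=V$.

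\textbf{Reduction to finite dimensions.} For $v\in V$, let $W_v$ be the span of the finite orbit $\{F^jv\}$. This is a finite-dimensional $F$-stable subspace on which $F$ acts as a permutation of a finite spanning set. Hence $F^N=1$ on $W_v$ for some $N\geq 1$, for instance the order of the induced permutation. The space $V$ is the filtered union of such finite-dimensional $F$-stable subspaces, since a finite sum of the $W_v$ is again of this form, with $N$ the least common multiple. Both $\ker(F-1)$ and $\operatorname{im}(F-1)$ are compatible with this union: taking invariants and coinvariants commutes with filtered colimits, coinvariants being a cokernel and kernels commuting with filtered colimits of vector spaces. It is therefore enough to prove the decomposition for a finite-dimensional $W$ with $F^N=1$.

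\textbf{Finite-dimensional case.} On such a $W$, use the averaging operator
$$e=\frac1N\sum_{j=0}^{N-1}F^j,$$
which is defined because we work over $\Q$. It satisfies $Fe=eF=e$, so its image lies in $\ker(F-1)$, and $e$ is the identity on $\ker(F-1)$. Thus $e$ is an idempotent projecting onto $\ker(F-1)$. Moreover $1-e$ lies in the ideal generated by $F-1$, since
$$1-F^j=(1-F)(1+F+\cdots+F^{j-1}),$$
so $\operatorname{im}(1-e)\subseteq\operatorname{im}(F-1)$. Conversely $e(F-1)=0$, so $\operatorname{im}(F-1)\subseteq\ker e=\operatorname{im}(1-e)$. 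Therefore
$$W=\operatorname{im}e\oplus\operatorname{im}(1-e)=\ker(F-1)\oplus\operatorname{im}(F-1),$$
which gives the isomorphism on $W$.

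Alternatively, one can avoid the reduction step entirely and argue directly on $V$: for $x=(F-1)y$ fixed by $F$, take $N$ with $F^Ny=y$ and compute $Nx=\sum_{j<N}F^jx=(F^N-1)y=0$. For surjectivity, write $v=e_Nv+(1-e_N)v$ with $N$ depending on $v$.

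\textbf{Main obstacle.} The only real care needed is this dependence of $N$ on $v$, since there is no uniform $N$ on all of $V$. Either the colimit argument or the pointwise argument just given handles it. Beyond that, the essential input is that $N$ is invertible, which is exactly where the $\Q$-coefficients are used.
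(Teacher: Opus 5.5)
Your proposal is correct and is essentially the paper's argument. The paper also averages over a period, $\pi(v)=\frac1n\sum_{i=0}^{n-1}F^iv$, and uses $1-F^i=-(F-1)(1+F+\cdots+F^{i-1})$, but it packages this as an explicit inverse $V_{F-1}\to V^{F-1}$ and checks that $\pi$ does not depend on the chosen period. Your proof of $V=\ker(F-1)\oplus\operatorname{im}(F-1)$, either on finite-dimensional pieces with a uniform $N$ or pointwise via $Nx=(F^N-1)y=0$, sidesteps that check and makes linearity automatic.
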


The sequence 
$$ 0\to \bigoplus_\Z\Q \stackrel{F-1}{\longrightarrow} \bigoplus_\Z\Q
\stackrel{\sum}{\longrightarrow} \Q\to 0,$$
where $F$ acts like the shift, shows that the hypothesis is necessary.

\proof
For $v \in V$ with $F^n v = v$, set $\pi_n(v) = \frac1n\sum_{i=0}^{n-1} F^i v$. 
It is easy to check that if also $F^{dn}v=v$, then
$\pi_{dn}(v) =  \pi_n(v)$. We write $\pi(v)$ for this value and
claim that this gives an inverse to the natural map. 
First observe that $F\,\pi(v) = \pi(v)$ so that $\pi$ is well-defined on $V_{F-1}$
and its image lies in $V^{F-1}$. Furthermore $\pi|{V^{F-1}} = \operatorname{id}$
since $Fv=v$ gives $\pi(v)=\tfrac1n\cdot nv = v$.
Conversely, $v - \pi(v) = \frac1n\sum_{i=0}^{n-1}(1-F^i)v \;\in\; 
\operatorname{im}(F-1)$,
since $1-F^i = -(F-1)(1+F+\cdots+F^{i-1})$, so that $v$ and $\pi(v)$
agree in $V_{F-1}$.
\proofend

By Proposition \ref{mccolim}, 
$H^i_\M(X_\F,\Q(n))\cong \colim_r H^i_\M(X_{\F_{q^r}},\Q(n))$ 
satisfies the hypothesis of the Lemma. Since 
$H^i_\M(X,\Q(n))\cong H^i_\M(X_{\F_{q^r}},\Q(n))^{F-1}$ for all
$r$ by  a norm argument, this implies that  
$H^i_\M(X,\Q(n))=H^i_\M(X_\F,\Q(n))^{F-1}$.

\proof

$1) \Rightarrow 2)$: For $\K=\F $, we have 
$H^i_\M(X,\Q(n))=H^i_\M(X_\F,\Q(n))^{F-1}=0$ for $i<2n$, hence
Parshin's conjecture holds. The first statement implies that
$\CH_{hom}^n(X)_\Q\cong \CH_{hom}^n(X_{\F })_\Q^{F-1}$ vanishes,
hence Beilinson's conjecture. Finally, we  will see in Theorem 
\ref{chowrigid} that $\CH^n(X_\K)_\Q/\CH_{\hom}^n(X_\K)_\Q$ is
independent of $\K$, so that the diagram 
$$\begin{CD}
0@>>> \CH_{\hom}^n(X_{\K })_\Q@>>> \CH^n(X_{\K })_\Q
@>t>> (\CH^n(X_{\K })/\hom)_\Q@>>> 0\\
@.@VF-1VV@VF-1VV @VF-1VV\\
0@>>>\CH_{hom}^n(X_{\K })_\Q@>>> \CH^n(X_{\K })_\Q
@>t>> (\CH^n(X_{\K })/\hom)_\Q@>>> 0\\
\end{CD}$$
shows that the kernel of $F-1$ on $\CH^n(X_{\K })_\Q$
is also independent of $\K$ if $F-1$ is an isomorphism on the left terms.
Moreover, the kernels and cokernels vanish on all other motivic cohomology 
groups, hence are independent of $\K$.

$2) \Rightarrow 1)$: By independence on $\K$ it suffices
to show the result for $\F$. Parshin's conjecture implies
that $H^i_\M(X,\Q(n))=H^i_\M(X_\F,\Q(n))^{F-1}=0$, hence the cokernel
also vanishes by the Lemma. On the other hand, 
$\CH_{\hom}^n(X)_\Q\cong \CH_{\hom}^n(X_{\F })_\Q^{F-1}$
by Beilinson's conjecture, and $(\CH_{\hom}^n(X_{\F })_\Q)_{F-1}$
vanishes by the Lemma. 
\proofend

\section{Chow groups}
The Chow groups $CH^n(X)$ have been studied extensively, and we are discussing
our question in this setting. We have a filtration 
$$ 0\subset A^n(X)\subset \CH_{\hom}^n(X)\subset \CH^n(X),$$
where we let $\CH_{\hom}^n(X)$ be the cycles homologically equivalent to zero
for some fixed $\ell$-adic cohomology for $\ell\not=p$
and $A^n(X)$ are the cycles algebraically equivalent to zero,
i.e., $A^n(X)$ is the subgroup generated as follows: 
For any smooth connected curve $C/k$, two points 
$p_0,p_1\in C(k)$, and a cycle $Z\in \CH^n(X\times C)$, we let
$ Z|_{X\times\{p_0\}} -Z|_{X\times\{p_1\}}\in A^n(X)$.
The quotient $\Gr^n(X)=\CH_{\hom}^n(X)/A^n(X)$ is called the Griffiths
group.

If $p\geq 3$, then Fakhruddin \cite{fak-thesis}, 
see \cite[Remark A.3]{fak-comp}, shows that $\Gr^2(A)$ is of infinite 
rank for $A$ a generic abelian threefold
\footnote{Generic abelian varieties may not be defined over finite fields}.
In contrast, the following result of Soul\'e, implies that for an abelian 
threefold $A$ over the algebraic closure of a finite field, 
$\CH_{\hom}^n(A)$ is torsion:

\begin{theorem}(Soul\'e \cite[Thm. 5]{soule})
Let $A$ be a product of curves or an abelian variety of dimension $d$ over 
the algebraic closure
of a finite field. Then for $n=0,1,d-1,d$, $CH^n(A)$ is the direct sum 
of a finitely generated group and a torsion group. 
\end{theorem}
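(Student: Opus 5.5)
The statement to prove is Soul\'e's theorem: for $A$ a product of curves or an abelian variety of dimension $d$ over $\F=\bar\F_q$ and $n\in\{0,1,d-1,d\}$, $CH^n(A)$ is a finitely generated group plus a torsion group. The plan is to descend to a finite field, decompose motivically under multiplication maps (respectively the Chow--K\"unneth decomposition for products of curves), and use Frobenius weights to kill the rational part of $\CH^n_{\hom}$. Since $A$ and any given cycle are defined over some $\F_{q^r}$, we have $CH^n(A)=\colim_r CH^n(A_{\F_{q^r}})$. It therefore suffices to show two things. First, $CH^n_{\hom}(A)_\Q=0$ for these $n$. Second, the image of $CH^n(A)$ in $\ell$-adic cohomology (modulo torsion) is finitely generated.

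\emph{Extreme codimensions.} The cases $n=0$ and $n=1$ are elementary. We have $CH^0(A)=\Z^{\pi_0}$. For $n=1$, $CH^1(A)=\Pic(A)$ is an extension of the finitely generated $\NS(A)$ by $\Pic^0(A)(\F)$, which is torsion because every $\F$-point of an abelian variety is defined over a finite field. For $n=d$, the degree map leaves $CH_0$ of degree zero, and the Albanese map sends it to $\mathrm{Alb}(A)(\F)$, which is torsion. We then need the kernel $T(A)$ of the Albanese map to be torsion. For this we would use Bloch's decomposition of $CH_0$ of an abelian variety: write $[x]-[0]$ and use the Pontryagin product filtration, so that $I^{*2}$ is generated by $([a]-[0])*([b]-[0])$. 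Since $a,b$ are torsion points, say killed by $N$, the multiplication maps $[N]_*$ and the eigenspace decomposition of Beauville show that these classes are torsion. Concretely, in $CH_0(A)_\Q$ the element $[a]-[0]$ lies in the summand of weight $\geq 1$, while $N$-torsion of $a$ forces $[N]_*([a]-[0])=0$. On the weight-$s$ piece, $[N]_*$ acts as $N^s$, which gives vanishing rationally. The case $n=d-1$ (one-cycles) is handled the same way using Beauville's decomposition $CH^{d-1}_\Q=\bigoplus_s CH^{d-1}_{(s)}$.

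\emph{Middle codimensions.} For $n=1$ and $n=d-1$ the plan is: (i) take the Beauville decomposition $CH^n(A)_\Q=\bigoplus_s CH^n_{(s)}$, with $[N]^*$ acting as $N^{2n-s}$; (ii) observe that over a finite field the Frobenius $\pi$ of $A_{\F_{q^r}}$ is an endomorphism of $A$ commuting with everything, and lies in $\Q[\text{isogenies}]$ acting on each summand; (iii) use the fact that the pieces with $s\neq0$ are controlled by $CH^1$ and $CH_0$ via Fourier transform and Pontryagin/intersection products with divisors. Here we would use that $CH^{d-1}_{(s)}$ is spanned by products of divisor classes with $CH_0$-type classes, reducing to the two previous cases. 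For products of curves the same argument applies, with the Chow--K\"unneth projectors built from curves playing the role of Beauville's decomposition. Finite generation of the $s=0$ part follows from its injection into $\ell$-adic cohomology, which holds for divisors and one-cycles by Lefschetz $(1,1)$/Tate for abelian varieties over finite fields together with hard Lefschetz; combined with the rational vanishing of the $s\neq 0$ parts, this gives the decomposition.

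\emph{Main obstacle.} The hardest step is the $n=d-1$ case: showing that $CH^{d-1}_{(s)}=0$ rationally for $s>0$. This is where the torsion-ness of $\F$-points, fed through the Fourier transform, has to do the work. I would first try to prove that the Fourier transform identifies $CH^{d-1}_{(s)}$ with a Pontryagin-type piece generated by $CH^1_{(s)}$ and $CH_0$, both already shown to vanish rationally.
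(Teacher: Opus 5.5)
The paper gives no proof of this statement; it is quoted from Soul\'e, whose proof is a Frobenius-weight argument on the Chow--K\"unneth decomposition of the motive. Your cases $n=0,1$ are fine. So is $n=d$ for abelian varieties: $[N]_*$ acts on $\bigoplus_{s\ge1}\CH_{0,(s)}$ by the invertible scalars $N^s$ and kills $[a]-[0]$ when $Na=0$. For a product of curves there is no $[N]$, so argue instead that $\CH_0(C_1)\otimes\cdots\otimes\CH_0(C_d)\to\CH_0(X)$ is surjective and each $\CH_0(C_i)=\Z\oplus J_i(\F)$ with $J_i(\F)$ torsion.

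The genuine gap is $n=d-1$, and your proposed mechanism there fails. The Fourier transform sends $\CH^{d-1}_{(s)}$ to $\CH^{s+1}_{(s)}$. It therefore links one-cycles to divisors only for $s=0$ and to zero-cycles only for $s=d-1$; the pieces $1\le s\le d-2$ are untouched. Nor is $\CH^{d-1}_{(s)}$, for $s\ge1$, spanned by intersection and Pontryagin products of classes from $\CH^1$ and $\CH_0$. Both products respect the Beauville grading, so every positive-weight element of that subalgebra involves a factor from $\Pic^0(A)_\Q$ or from degree-zero $0$-cycles, and is therefore algebraically trivial. By contrast, for the Jacobian of a very general complex curve of genus $\ge 3$, the Ceresa cycle $[C]-[-1]_*[C]=2\sum_{s\ \mathrm{odd}}[C]_{(s)}$ is not algebraically trivial. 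So the only arithmetic input you use, that $A(\F)$ is torsion, cannot control one-cycles through such formal manipulations.

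What is missing is the weight argument you announce but never carry out.
\begin{itemize}
\item Over $\F_{q^r}$ the Frobenius endomorphism $\pi$ commutes with every $[N]$, so it preserves each $\CH^n_{(s)}=\CH^n(h^{2n-s}(A))$. On the other hand, $\pi^*$ multiplies every codimension-$n$ cycle by $q^{rn}$.
\item By Shermenev, Deninger--Murre and K\"unnemann, $h^i(A)\cong\mathrm{Sym}^i h^1(A)$, and $\mathrm{End}(A)_\Q$ acts on $h^1(A)$, where $\pi$ satisfies its characteristic polynomial. Hence on $h^i(A)$, $\pi$ is killed by a polynomial all of whose roots have absolute value $q^{ri/2}$.
\item For $i=2n-s\neq 2n$ this forces $\CH^n_{(s)}(A_{\F_{q^r}})_\Q=0$, for every $n$. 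Passing to the colimit gives $\CH^n(A)_\Q=\CH^n_{(0)}$.
\end{itemize}
The restriction to $n\in\{0,1,d-1,d\}$ enters only through the injectivity of $\CH^n_{(0)}$ into cohomology. For $n=d-1$ this comes from $\mathcal{F}\colon\CH^{d-1}_{(0)}\cong\CH^1_{(0)}\cong\NS(A)_\Q$. Products of curves are handled the same way, using $h(C_i)=\mathbf{1}\oplus h^1(C_i)\oplus\mathbb{L}$.

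Finally, injectivity into $\ell$-adic cohomology gives finite rank, not finite generation. For the integral statement, embed $\CH^{d-1}(A)/\mathrm{tors}$ into $\Hom(\NS(A),\Z)$ via intersection numbers. This map is injective by hard Lefschetz for $\Theta^{d-2}$ together with the Hodge index theorem for divisors.
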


As can be seen from the Picard group, the Chow group depends
on the choice of the algebraically closed field. However, the
following classical result shows that the Griffiths group does not 
depend on the algebraically closed field:

\begin{theorem}\label{chowrigid}
If $X$ is smooth and projective over an algebraically
closed field $k$, then 
the group $\CH^n(X)/A^n(X)$ of cycles modulo algebraic equivalence, 
the Griffiths group, and $\CH^n(X)/\CH_{\hom}^n(X)$ 
are countable, and do not change when base extending to a larger
algebraically closed field. 
\end{theorem}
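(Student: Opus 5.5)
Since $\CH^n(X)/\CH_{\hom}^n(X)$ and the Griffiths group are a quotient and a subquotient of $\CH^n(X)/A^n(X)$, the whole statement follows once we show that the base change map $\CH^n(X)/A^n(X)\to \CH^n(X_\K)/A^n(X_\K)$ is bijective for an extension of algebraically closed fields $k\subseteq\K$, and that $\CH^n(X)/A^n(X)$ is countable.
\begin{itemize}
\item For the Griffiths group and $\CH^n/\CH_{\hom}^n$ we also need that homological equivalence is preserved in both directions. This holds because $\ell$-adic cohomology of $X_\K$ agrees with that of $X$ by smooth and proper base change (Theorem \ref{fincoef}). Hence the cycle class map on $X_\K$ factors through the same group, and a cycle from $X$ is homologically trivial over $\K$ if and only if it is over $k$.
\item The statements for the two remaining groups then follow from the five lemma applied to the sequence $0\to\Gr^n\to \CH^n/A^n\to \CH^n/\CH^n_{\hom}\to 0$.
\end{itemize}

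\textbf{Surjectivity.} Let $Z$ be a cycle on $X_\K$. By Proposition \ref{mccolim}(a) (or EGA descent of cycles), $Z$ is defined over a finitely generated extension $E/k$. Spread it out to a cycle $\mathcal Z$ on $X\times S$, where $S$ is a smooth connected affine $k$-variety with $k(S)=E$. Pick a point $s_0\in S(k)$, which exists because $k$ is algebraically closed. For any two points of $S$, we can connect them by a chain of smooth curves: take curves through them, normalize, and remove singular points. Specializing along such a chain, the cycle $\mathcal Z$ at the $\K$-point corresponding to $\Spec \K\to \Spec E\to S$ is algebraically equivalent over $\K$ to $\mathcal Z_{s_0}\times_k\K$. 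Here one uses curves in $S_\K$ joining the generic-type point to $s_0$, together with the fact that algebraic equivalence can be tested with smooth curves, so pulling back $\mathcal Z$ gives a family over the curve. Hence $Z\equiv (\mathcal Z_{s_0})_\K$ modulo $A^n(X_\K)$.

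\textbf{Injectivity.} Suppose $Z$ on $X$ becomes algebraically equivalent to $0$ over $\K$. The relation involves finitely many curves $C_j$, points, and cycles on $X\times C_j$, all defined over $\K$. Descend all of these data to a finitely generated $E$, then spread them out over a smooth affine $S$ as above: we get smooth relative curves $\mathcal C_j\to S$, sections, and cycles on $X\times\mathcal C_j$, which are flat after shrinking $S$. Specializing at a point $s\in S(k)$ yields an algebraic equivalence over $k$, so $Z\in A^n(X)$. This is the same Nullstellensatz section trick as in Lemma \ref{inject}, applied to the functor "cycles modulo algebraic equivalence". I expect the main obstacle to lie here: one must check that the specialized data still form a valid algebraic equivalence, namely that the fibers of $\mathcal C_j$ at $s$ are smooth connected curves and that the cycles specialize compatibly with restriction to the sections. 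I would handle this by shrinking $S$ until the fibers are geometrically connected and smooth and the relevant intersections are proper, and then use compatibility of refined Gysin pullback with specialization (Fulton, Ch.~10–11).

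\textbf{Countability.} By the bijection just proved, $\CH^n(X)/A^n(X)$ is isomorphic to $\CH^n(X_0)/A^n(X_0)$, where $X_0$ is a model of $X$ over the algebraic closure $k_0$ of the prime field inside $k$. Such a model exists because $X$ is defined over a finitely generated subfield of $k$, and the bijection applies to the extension $\bar{k_0(\text{that subfield})}\subset k$. Finally, $k_0$ is countable, so $X_0$ has only countably many cycles, and the group $\CH^n(X_0)/A^n(X_0)$ is therefore countable.
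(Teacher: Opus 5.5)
Your proposal is correct. Three parts follow the paper: the injectivity step, the passage to $\CH^n/\CH^n_{\hom}$ via smooth and proper base change, and the passage to $\Gr^n$ via the short exact sequence. For injectivity, you descend the finitely many curves, points and cycles to a finitely generated $E$, spread out over a smooth $S$, shrink so the relative curves have smooth geometrically connected fibers, and specialize at a $k$-point. The data you descend should also include the rational equivalence that witnesses the relation in $\CH^n(X_\K)$; it descends because Chow groups commute with the colimit, by Proposition \ref{mccolim}.

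Where you differ is surjectivity and countability.

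\begin{itemize}
\item The paper gets both from the Hilbert scheme. $\Hilb_{X/k}$ has countably many connected components, and points of one component parametrize algebraically equivalent subschemes. Each component of $\Hilb_{X_\K/\K}=\Hilb_{X/k}\times_k\K$ is the base change of a component of $\Hilb_{X/k}$, which has a $k$-point. So one universal parameter space yields countability directly over $k$ and surjectivity at the same time.
\item You instead use the ad hoc parameter space $S$ obtained by spreading out a single cycle, together with connectedness of the integral variety $S_\K$.
\end{itemize}

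Your route avoids the Hilbert scheme, and it makes surjectivity and injectivity two instances of the same spread-out-and-specialize technique. The cost is that countability is no longer visible directly. You have to deduce it from rigidity by descending $X$ to a countable algebraically closed field.

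In that step, fix the wording. $X$ need not be defined over the algebraic closure of the prime field; an elliptic curve with transcendental $j$-invariant is a counterexample. The right field is the algebraic closure $\overline{k_1}$ in $k$ of a finitely generated field of definition $k_1$. What matters is that $\overline{k_1}$ is countable, so the model $X_1\times_{k_1}\overline{k_1}$ has only countably many cycles.
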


\proof 
Closed subschemes of $X$ are classified by the Hilbert scheme
$$\Hilb_{X/k}=\amalg_{\phi(x)\in \Q[x]}\Hilb_{X/k}^\phi$$
see \cite[Theorem 5.1]{nitsure}. 
More precisely, we have a bijection of sets
$$\Hom_k(T, \Hilb_{X/k})=\{Z\subset X\times_k T \text{ closed} \mid 
Z \text{ proper, flat} /T \},$$
and $\Hilb_{X/k}^\phi$ corresponds to the subschemes having Hilbert 
polynomials $\phi(x)$. There is a universal subscheme 
$U\subset X\times \Hilb_{X/k}$ (corresponding to the identity of 
$\Hilb_{X/k}$) such that the above bijection sends 
$f:T\to \Hilb_{X/k}$ to the pull-back of $U$ along $f$.
Each $\Hilb_{X/k}^\phi$ is projective, hence has finitely many connected components, so that 
$\Hilb_{X/k}$ has countably many connected components.
If two $k$-rational points $p_i$ of $\Hilb_{X/k}^\phi$, corresponding
to the closed subschemes $U_{p_i}$ of $X$, lie in the
same connected component, there is a curve $\iota: C\to \Hilb_{X/k}^\phi$ 
over $k$ passing through both points. We obtain a cartesian diagram 
$$ \begin{CD}
U_{p_i}@>>> X\times \{p_i\}@>>> \Spec k\\
@VVV @Vp_iVV @Vp_iVV\\
U_C@>>> X\times C@>>> C\\
@VVV @V\iota VV @V\iota VV\\
U@>>> X\times \Hilb_{X/k}^\phi@>>> \Hilb_{X/k}^\phi.
\end{CD}$$
Then $U_{p_i}=U_C|_{X\times \{p_i\}}$ and thus $U_{p_1}-U_{p_2}\in A^n(X)$,
i.e., any two subschemes corresponding to points in the same connected 
component are algebraically equivalent. 
It follows that $\CH^n(X)/A^n(X)$, and hence the Griffiths group 
as well as $\CH^n(X)/\CH_{\hom}^n(X)$, are countable.

Now let $\K/k$ be an extension of algebraically closed fields. The same
argument shows that every cycle on $X_\K$ is algebraically equivalent to
a cycle defined over $k$. Indeed, each component of
$\Hilb_{X_\K/\K}=\Hilb_{X/k}\times_k\K$ is the base change of a
connected component of $\Hilb_{X/k}$, and every such component contains
a $k$-rational point. Thus
$$\CH^n(X)/A^n(X)\longrightarrow\CH^n(X_\K)/A^n(X_\K)$$
is surjective and we show injectivity. 
Suppose that a class
$z\in\CH^n(X)$ becomes algebraically trivial over $\K$. An algebraic
equivalence between $z_\K$ and zero is represented by a family of cycles
over a connected curve together with two marked fibers. All the
data involved are of finite type and hence descend to a finitely
generated intermediate field $E$, $k\subset E\subset\K$. Spreading
out over an integral $k$-variety with function field $E$, and then
specializing at a $k$-rational point, gives an algebraic equivalence
between $z$ and zero over $k$. Hence $z\in A^n(X)$.

Next consider homological equivalence. For the fixed prime $\ell\ne p$, 
the smooth and proper base change gives an isomorphism
$
H^{2n}_{\et}(X,\Z_\ell(n))
\xrightarrow{\sim}
H^{2n}_{\et}(X_\K,\Z_\ell(n)),
$
compatible with the cycle class maps. Hence
$$
\CH^n(X)/\CH^n_{\hom}(X)
\longrightarrow
\CH^n(X_\K)/\CH^n_{\hom}(X_\K)
$$
is injective, and 
surjectivity follows by the above. 
Finally, the commutative diagram with exact rows
$$
\begin{CD}
0@>>>\Gr^n(X)@>>>\CH^n(X)/A^n(X)
@>>>\CH^n(X)/\CH^n_{\hom}(X)@>>>0\\
@.@VVV@VV\sim V@VV\sim V@.\\
0@>>>\Gr^n(X_\K)@>>>\CH^n(X_\K)/A^n(X_\K)
@>>>\CH^n(X_\K)/\CH^n_{\hom}(X_\K)@>>>0,
\end{CD}
$$
gives an isomorphism of the left terms.
\proofend

It is an interesting question if the cone of $F-1$ on $A^n(X)$ is independent
of $\K$.

\section{Cycle complexes and Gersten resolutions}
In this section we discuss Frobenius rigidity in terms of 
the cycle complex and Gersten resolutions. Let again $X$ be a
smooth and proper variety over a finite field $\F_q$.

\subsection*{Cycle complexes}
Consider Bloch's cycle complex $z^n(X,*)$, which in degree $i$
is the free abelian group on closed integral subschemes of codimension $n$
on $X\times \Delta^i$ meeting the faces properly. We are
interested in the cone of
\begin{equation}\label{thecone}
z^n(X_\K,*)_\Q \stackrel{F-1}{\longrightarrow}z^n(X_\K,*)_\Q.
\end{equation}

Since we understand the situation with torsion coefficients,
we focus on rational coefficients in this section. 
If a point has a finite orbit, then the cone of Frobenius looks as follows:
$$ 0\to \Q\stackrel{\Delta}{\to} \bigoplus_{orbit}\Q 
\stackrel{F-1}{\longrightarrow}
\bigoplus_{orbit}\Q\stackrel{\Sigma}{\to} \Q\to 0,$$
where the map on the left is the diagonal embedding and the map on 
the right is the sum.
Thus we obtain that the kernel as well as the cokernel of $F-1$
restricted to the subcomplex consisting of points with finite orbits 
in \eqref{thecone} are isomorphic to $z^n(X,*)_\Q$, see Lemma \ref{finiteorbits}. 
For infinite orbits, we obtain 
$$\begin{CD}
0@>>> \bigoplus_\Z\Q @>F-1>>\bigoplus_\Z\Q@>\sum>> \Q@>>>0\\
\end{CD}$$

Thus the kernel of $F-1$ in \eqref{thecone} is isomorphic to $z^n(X,*)_\Q$,
and Frobenius rigidity is equivalent to the acyclicity of 
the complex consisting of 
one copy of $\Q$ for every infinite Frobenius orbit.

\subsection*{Gersten resolutions} 
We consider the following Frobenius rigidity statement for fields: 

{\it For every finitely generated field $L$ 
over $\F_q$, and any algebraically closed field extension, 
$\F \subseteq \K$,
the cones of $F-1$ acting on the second factor of the tensor product for
$R\Gamma(L\otimes_{\F_q} \F, \Q(n))$ and for $R\Gamma(L\otimes_{\F_q}\K,\Q(n))$
are quasi-isomorphic.}

\begin{proposition}
Frobenius rigidity for all smooth $X/\F_q$ is equivalent to
Frobenius rigidity for fields.
\end{proposition}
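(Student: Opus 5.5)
The plan is to use the Gersten (coniveau) resolution for rational motivic cohomology and to observe that everything in it is compatible with the Frobenius acting on the base field. For smooth $X/\F_q$ and any field $k\supseteq\F_q$, the Gersten conjecture for the cycle complex gives a resolution of the Zariski sheaf $\Q(n)$ on $X_k$. Equivalently, there is a coniveau spectral sequence
$$E_1^{s,t}=\bigoplus_{x\in (X_k)^{(s)}} H^{t-s}_\M(k(x),\Q(n-s))\Rightarrow H^{s+t}_\M(X_k,\Q(n)).$$
Here I apply this with $k=\F$ and $k=\K$. The points of $X_k$ of codimension $s$ are grouped according to their image in $X$. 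The key observation is that, as a Frobenius-module, the term $\bigoplus_{x\in (X_k)^{(s)}}H(k(x))$ can be rewritten as $\bigoplus_{y} R\Gamma(L_y\otimes_{\F_q}k,\Q(n-s))$. In this sum, $y$ runs over points of $X$ of various codimension, $L_y=\F_q(y)$, and $F$ acts on the second factor. I must be a bit careful here: points of $X_k$ over $y$ of codimension $c$ in $X$ have codimension up to $c$ in $X_k$, not exactly $c$, since $L_y\otimes k$ has dimension $\trdeg L_y$. So the honest statement is that the Gersten complex on $X_k$ is filtered by coniveau on $X$. The graded pieces are the full complexes $R\Gamma_\M(L_y\otimes_{\F_q}k,\Q(n-c))[-2c]$, the latter being computed via their own Gersten complexes, which is fine since $L_y\otimes k$ is a colimit of smooth $k$-schemes and Proposition \ref{mccolim} applies.

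With this filtration in hand, the direction ``fields $\Rightarrow$ $X$'' goes as follows. The base change $\F\subseteq\K$ induces a map of filtered complexes commuting with $F$, hence a map between the cones of $F-1$. On graded pieces this map is a quasi-isomorphism by rigidity for the fields $L_y$. The filtration is finite, of length $\dim X$, so an induction over the filtration using the five lemma gives a quasi-isomorphism on cones of $F-1$ for $R\Gamma_\M(X_k,\Q(n))$. The direct sums over $y$ cause no trouble, since cones commute with direct sums.

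The converse is proved by a colimit. Given a finitely generated $L/\F_q$, choose a smooth model $X$ with function field $L$. Then $R\Gamma(L\otimes k,\Q(n))=\colim_{U\subseteq X}R\Gamma_\M(U_k,\Q(n))$ by Proposition \ref{mccolim} b), and this is compatible with $F$. Filtered colimits preserve quasi-isomorphisms and commute with cones, so rigidity for all smooth $U$ gives rigidity for $L$. Here I read ``all smooth $X$'' as including nonproper ones; this is necessary, since the statement is made for all smooth $X$.

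The main obstacle is the first step: checking that the Gersten resolution over $X_k$ really yields a filtration by Frobenius-stable subcomplexes whose graded pieces are identified, $F$-equivariantly, with the motivic complexes of $L_y\otimes_{\F_q}k$ (with the correct purity shift), for the nonnoetherian-looking rings $L_y\otimes k$. I would first handle this by working with the cycle complex directly. For each $y$ I take the subcomplex of cycles whose generic points lie over $\overline{\{y\}}$, and apply Bloch's localization theorem on $X_k$, which is flat over $X$, to the closed subsets $\overline{\{y\}}_k$. Passing to the colimit over opens then identifies the graded piece with $z^{n-c}(L_y\otimes k,*)$. All these identifications are natural in $k$ and in pullback along $\id\times\varphi_k$, and $F$ commutes with them.
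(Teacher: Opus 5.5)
Your proposal is correct and follows essentially the same route as the paper. The paper also filters $X_\K$ by coniveau on $X$ rather than on $X_\K$, which gives a spectral sequence with $E_1^{a,b}=\bigoplus_{x\in X^{(a)}}H^{b-a}(k(x)\otimes_{\F_q}\K,\Q(n-a))$ and reduces rigidity to fields; it gets the converse by taking the colimit over open subsets of a smooth model of $L$. One small slip: points of $X_k$ over $y\in X^{(c)}$ have codimension \emph{at least} $c$ in $X_k$ (by flatness, between $c$ and $c+\dim(L_y\otimes_{\F_q}k)$), not ``up to $c$''. This does not affect your argument.
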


\proof
The field case can be derived from the smooth variety case
by taking the colimit over smaller and smaller open subsets of a model of $L$.
Conversely, filtering $X$ by dimension, the motivic cohomology of 
$X_k$ and $X_\K$
can be calculated by a spectral sequence whose $E_1$-term is
$$
E_1^{a,b}=\bigoplus_{x\in X^{(a)}} H^{b-a}(k(x)\otimes_{\F_q}\K,\Q(n-a)) 
\Rightarrow H^{a+b}_\M(X_\K,\Q(n))$$
which reduces the rigidity question to fields.
\proofend

\subsection*{The ring $L\otimes_k\K$}
\begin{theorem}[Sharp]\cite{sharp}
For any field $k$ and extension fields $L$ and $K$ of $k$, we have
$$\dim (L\otimes_kK) = \min\{\trdeg_k L, \trdeg_k K\}.$$
\end{theorem}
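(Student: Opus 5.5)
The plan is to prove the two inequalities $\dim(L\otimes_kK)\le \min\{\trdeg_k L,\trdeg_k K\}$ and $\ge$ separately, first reducing to the case where both transcendence degrees are finite. Write $r=\trdeg_kL$ and $s=\trdeg_kK$, and assume by symmetry $r\le s$.

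For the upper bound, choose a transcendence basis $x_1,\dots,x_r$ of $L/k$ and put $L_0=k(x_1,\dots,x_r)$. Since $L/L_0$ is algebraic, $L\otimes_kK$ is integral over $L_0\otimes_kK$, so the two rings have the same dimension (lying over, going up, incomparability). The ring $L_0\otimes_kK$ is a localization of $K[x_1,\dots,x_r]$, namely at the multiplicative set of nonzero elements of $k[x_1,\dots,x_r]$. Hence its dimension is at most $r$. If $r$ is infinite, the inequality is vacuous on that side; then argue symmetrically with $K$, giving $\le s$. In general this gives the bound $\le\min(r,s)$, provided the roles are symmetric, which they are.

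For the lower bound, it suffices to treat $r$ finite, replacing $L$ by a subfield of transcendence degree $\min(r,s)$ if necessary. Flatness of $L'\otimes K\to L\otimes K$ over the field extension gives going down, and integrality lets us lift chains. Taking $r\le s$ finite, I would reduce to $L=k(x_1,\dots,x_r)$, since the integral extension preserves dimension. Then choose algebraically independent $y_1,\dots,y_r\in K$ and consider the chain of prime ideals in $K[x_1,\dots,x_r]$ given by
$$0\subset(x_1-y_1)\subset(x_1-y_1,x_2-y_2)\subset\cdots\subset(x_1-y_1,\dots,x_r-y_r).$$
It remains to check that these primes avoid every nonzero $f\in k[x]$, so that they survive in the localization $L_0\otimes_kK$. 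Modulo $(x_i-y_i)_{i\le j}$, the polynomial $f$ becomes $f(y_1,\dots,y_j,x_{j+1},\dots,x_r)$. This is nonzero by the algebraic independence of the $y_i$ over $k$ together with the independence of the variables $x_{j+1},\dots,x_r$. This produces a chain of length $r$.

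The main obstacle is the bookkeeping for infinite transcendence degrees, together with justifying the dimension equality under integral extensions for non-noetherian rings. For the infinite case, I would express $L$ as a directed union of subfields of finite transcendence degree and use the chains constructed above. Integral extensions preserve Krull dimension in general by Cohen–Seidenberg, so no noetherian hypothesis is needed for that step.
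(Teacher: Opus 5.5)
The paper does not prove this statement; it only cites Sharp, so your argument has to stand on its own. Most of it does. The upper bound is correct: you use $L_0\otimes_kK=S^{-1}K[x_1,\dots,x_r]$ with $S=k[x_1,\dots,x_r]\setminus\{0\}$, together with Cohen--Seidenberg for the injective integral extension $L_0\otimes_kK\subseteq L\otimes_kK$. The explicit chain $(x_1-y_1,\dots,x_j-y_j)$ is also correct. Here it suffices to note that $P_r\cap k[x]=0$, because $f\mapsto f(y_1,\dots,y_r)$ is injective on $k[x]$. When one of $r,s$ is finite you never need to shrink $L$: if $r\le s$ use $L$ directly, and if $s<r$ swap the roles of $L$ and $K$.

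The step that fails as written is the passage from a subfield $L'\subseteq L$ to $L$ when $L/L'$ is transcendental. You genuinely need this when $\trdeg_kL=\trdeg_kK=\infty$. In that situation $L\otimes_kK$ is not integral over $L'\otimes_kK$; already $t\otimes 1\in k(t)\otimes_kK$ is not integral over $K$. So integrality cannot lift chains. Flatness alone only gives going down, and going down needs a prime lying over the top of the chain to start from.

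The missing observation is faithful flatness. $L$ is a free $L'$-module, so $L\otimes_kK\cong L\otimes_{L'}(L'\otimes_kK)$ is a nonzero free $L'\otimes_kK$-module. Hence $\Spec(L\otimes_kK)\to\Spec(L'\otimes_kK)$ is surjective and satisfies going down. A chain $P_0\subsetneq\cdots\subsetneq P_n$ then lifts: choose $Q_n$ over $P_n$ and descend. This gives $\dim(L\otimes_kK)\ge\dim(L'\otimes_kK)$. Apply it with $L'=k(x_1,\dots,x_n)$ for each $n$, using your explicit chain (possible since $K$ contains $n$ algebraically independent elements). This gives $\dim(L\otimes_kK)=\infty$, which completes the proof.
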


For any finitely generated field $L$ over $\F_q$,
$L\otimes_{\F_q}\F$ is a finite product of finitely generated
fields over $\F$ because it is reduced, 
thus we consider finitely generated extension fields 
$L$ of an algebraically closed field $k$.

\begin{lemma}\cite[Prop.\ 4.3.9, Lemma 6.7.4.1]{EGAIV24}
For any algebraically closed field $k$, any finitely generated 
extension $L$ of $k$, and any field extension $\K$ of $k$,  
the ring $L\otimes_k\K$ is integral, regular, and essentially
of finite type over $\K$.
\end{lemma}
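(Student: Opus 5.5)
Writing $L=\colim_\alpha A_\alpha$ as the filtered union of finitely generated $k$-subalgebras which are smooth domains (possible since $k$ is perfect and $L$ is finitely generated), we obtain
$$L\otimes_k\K=\colim_\alpha (A_\alpha\otimes_k\K).$$
Each $A_\alpha\otimes_k\K$ is the coordinate ring of the base change of a smooth integral $k$-variety. Indeed, $L$ is the function field of a smooth integral affine $k$-variety $\Spec A$, and $L=S^{-1}A$ for $S=A\setminus\{0\}$. Since localization commutes with tensor products,
$$L\otimes_k\K=S^{-1}(A\otimes_k\K),$$
so $L\otimes_k\K$ is a localization of the finitely generated $\K$-algebra $A\otimes_k\K$. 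This gives essentially of finite type over $\K$ directly.

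For integrality, I would argue that $A\otimes_k\K$ is a domain. Because $k$ is algebraically closed, $L/k$ is a regular extension: it is separable since $k$ is perfect, and $k$ is algebraically closed in $L$. For a regular extension, $L\otimes_k\K$ is a domain for every field extension $\K/k$. This is the classical criterion (EGA IV 4.3.2/4.3.9, or Bourbaki). Concretely, $\Spec A$ is geometrically integral, so $A\otimes_k\K$ is a domain, and its localization $L\otimes_k\K$ is again a domain (note $S$ maps to nonzero elements because $A\to A\otimes_k\K$ is injective, by flatness over the field $k$).

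For regularity, $A$ is smooth over $k$, so $A\otimes_k\K$ is smooth over $\K$ because smoothness is stable under base change. A smooth algebra over a field is regular, and localizations of regular rings are regular. Hence $L\otimes_k\K$ is regular. Alternatively, I would cite the fact that geometric regularity of $L/k$ (which holds since $k$ is perfect) gives regularity of $L\otimes_k\K$ for any extension $\K$; this is EGA IV 6.7.4.1.

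The only delicate point is integrality, which uses that $k$ is algebraically closed rather than merely perfect: for $k$ perfect but not algebraically closed, $L\otimes_k\bar k$ can split into a product. I would handle it via geometric integrality of $\Spec A$, deduced from $k=\bar k$ and reducedness plus irreducibility being preserved under base change to $\K$ for varieties over algebraically closed fields (EGA IV 4.3.9). Everything else is formal base change and localization.
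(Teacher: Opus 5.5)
Your argument is correct, and it is essentially an unpacking of the two EGA citations the paper gives instead of a proof. Integrality comes from geometric integrality over the algebraically closed field $k$ (EGA IV 4.3.9). Regularity comes from geometric regularity of $L/k$, which you make concrete with a smooth affine model $\Spec A$ and $L=S^{-1}A$: then $L\otimes_k\K=S^{-1}(A\otimes_k\K)$ is a localization of a smooth $\K$-algebra, and essential finite type over $\K$ comes for free. The opening colimit over the subalgebras $A_\alpha$ is never used and can be dropped.
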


\begin{example}
We have 
$$k(t)\otimes_k\K=\{\frac{f(t)}{g(t)}\mid f(t)\in \K[t], 
g(t)\in k[t]-\{0\}\},$$
and the spectrum consists of $\P_\K^1$ with all the closed points of 
$\P_k^1$ removed. The Frobenius acts on the residue field $\K(t)$
of the generic point, and permutes the closed points in infinite orbits. 
\end{example}

We are going to calculate the weight one motivic cohomology
of $L\otimes_k\K$. First 
we have the following affine version of Rosenlicht's theorem:

\begin{proposition}\label{tensorunits}
Let $A$ and $B$ be algebras over an algebraically closed field $k$
such that $k$ is algebraically closed in $A$ and $B$.
Then there is an exact sequence
$$ 0\to k^\times \to A^\times \times B^\times \to 
(A\otimes_kB)^\times\to 0,$$
where the left map sends $c$ to $(c,c^{-1})$ and the right
map sends $(a,b)$ to $a\otimes b$.
\end{proposition}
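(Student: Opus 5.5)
Exactness at $k^\times$ and the fact that the composite is trivial are formal. We have $c\otimes c^{-1}=cc^{-1}(1\otimes 1)=1$. Also $k^\times\to A^\times\times B^\times$ is injective. If $a\otimes b=1$ in $A\otimes_kB$, then choosing a $k$-basis of $B$ containing $b$ (when $b\notin k$) gives a contradiction. So $a,b$ are constants with $ab=1$, and exactness in the middle follows. The real content is surjectivity: every unit of $A\otimes_kB$ is a pure tensor $a\otimes b$.

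For surjectivity I reduce to finitely generated algebras. A unit $u$ and its inverse involve finitely many elements, so they lie in $A_0\otimes_k B_0$ with $A_0\subset A$ and $B_0\subset B$ finitely generated $k$-subalgebras. The hypothesis that $k$ is algebraically closed in $A$ passes to $A_0$. I implicitly assume $A,B$ are domains; this holds in the intended application $A=L$, $B=\K$ fields, or localizations thereof. Write $X=\Spec A_0$ and $Y=\Spec B_0$, irreducible varieties over $k$; since $k$ is algebraically closed, $X\times_k Y$ is irreducible. Now view $u$ as an invertible function on $X\times Y$. For each $k$-point $y\in Y(k)$, the restriction $u(-,y)\in A_0^\times$; similarly $u(x,-)\in B_0^\times$ for $x\in X(k)$. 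Fix a base point $y_0\in Y(k)$ and $x_0\in X(k)$, which exist by the Nullstellensatz. Consider
$$v=\frac{u(x,y)\,u(x_0,y_0)}{u(x,y_0)\,u(x_0,y)}\in (A_0\otimes_kB_0)^\times .$$
I want to show $v=1$. Then $u=a\otimes b$ with $a=u(-,y_0)$ and $b=u(x_0,-)/u(x_0,y_0)$.

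To prove $v=1$ I would use the standard Rosenlicht-type argument. The units of a normal (or just integral) variety modulo constants form a finitely generated free abelian group. First I would replace $X,Y$ by normalizations, or argue directly with the group $A_0^\times/k^\times$. The map $y\mapsto [v(-,y)]\in A_0^\times/k^\times$ is a map from the connected variety $Y$ to a discrete finitely generated torsion-free group. It is constant via a degree/divisor argument: the divisor of $v(-,y)$ on a compactification varies in a flat family, and a discrete invariant of a connected family is constant. Since $v(-,y_0)=1$, each $v(-,y)$ is a constant $c(y)$. Then $c(y)=v(x_0,y)=1$. Hence $v$ vanishes identically minus one on all $k$-points of $X\times Y$, and $v=1$ by reducedness and density of $k$-points. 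This is where algebraic closure of $k$ is used.

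For infinitely generated $A$ and $B$ there is a point to check. The element $a=u(-,y_0)$ depends on the choice of $y_0\in Y(k)$, but it lies in $A_0\subset A$ and is a unit there, so the argument gives a pure tensor in $A\otimes B$ anyway.

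The main obstacle is the rigidity step: showing $y\mapsto[v(-,y)]$ is constant. For this I would first try to invoke Rosenlicht's theorem for the product of varieties, $\mathcal O(X\times Y)^\times/k^\times\cong \mathcal O(X)^\times/k^\times\times\mathcal O(Y)^\times/k^\times$. If that cannot be cited, I would give a hands-on argument: the set of $y$ where $v(-,y)$ has bounded pole orders along the finitely many boundary divisors is constructible and closed under specialization in both directions.
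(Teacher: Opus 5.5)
Your arguments for exactness on the left and in the middle are fine. Using a basis of $B$ containing $1$ and $b$ is essentially the paper's elementary lemma.

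For surjectivity you take a genuinely different route. The paper simply cites Sweedler's theorem, which is purely algebraic and holds for arbitrary $k$-algebras in which $k$ is algebraically closed. You instead reduce to finitely generated subalgebras and run a Rosenlicht-type argument on the normalized unit $v$. Both the reduction and the base-point trick are correct, and citing Rosenlicht's lemma for irreducible varieties would be as legitimate as the paper's citation of Sweedler.

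However, as written the proof has a genuine gap in scope: you assume $A$ and $B$ are domains, and the proposition does not. Over an algebraically closed $k$, the hypothesis only says that $A$ is reduced with no nontrivial idempotents, as the paper remarks right after the statement. So, for example, $A=k[x,y]/(xy)$ is allowed. Your argument uses irreducibility of $X\times Y$ and the structure of units on an integral variety, so it says nothing in that case. It does suffice for the paper's application, where $A=L$ and $B=\K$ are fields, but that is not the statement.

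The fix is short:
\begin{itemize}
\item $A_0$ and $B_0$ inherit reducedness and connectedness of the spectrum, so $X=\bigcup X_i$ and $Y=\bigcup Y_j$ are connected reduced varieties.
\item The integral case gives $v|_{X_i\times Y_j}=g_{ij}\otimes h_{ij}$.
\item From $v(-,y_0)=1$, and by evaluating at $k$-points of $Y_j\cap Y_{j'}$ along chains of intersecting components, every $g_{ij}$ is constant. Symmetrically, every $h_{ij}$ is constant.
\item Then $v(x_0,-)=1$ together with connectedness of $X$ forces all the resulting constants to equal $1$.
\item Finally, reducedness of $A_0\otimes_kB_0$ gives $v=1$.
\end{itemize}

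Second, your hands-on version of the rigidity step is not yet a proof. The claims that ``a discrete invariant of a connected family is constant'' and that the relevant set is closed under specialization in both directions are unjustified. Also, divisors of $v(-,y)$ on a compactification only make sense after normalizing. If you do not want to cite Rosenlicht, argue globally, and you do not need finite generation of $\mathcal O(X)^\times/k^\times$ at all:
\begin{itemize}
\item Pass to normalizations of $X$ and $Y$. Pull-back of functions is injective, so it suffices to show $v=1$ there.
\item Choose a normal compactification $\bar X$ with boundary prime divisors $D_1,\dots,D_r$. Then $\bar X\times Y$ is normal, and $\operatorname{div}(v)$ is supported on $(\bar X\setminus X)\times Y$, so $\operatorname{div}(v)=\sum n_i\,D_i\times Y$.
\item Near the generic point of $D_i$, write $v=t^{n_i}w$, with $t$ a local equation of $D_i$ on an open $U\subset\bar X$ and $w$ a unit on $U\times Y$. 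Restricting to $y_0$, where $v=1$, gives $n_i=0$.
\item Hence $v$ is a unit on $\bar X\times Y$. So $v\in\mathcal O(\bar X\times Y)=\mathcal O(Y)$, and therefore $v=v(x_0,-)=1$.
\end{itemize}

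As for what each route buys: Sweedler's theorem handles arbitrary algebras in one stroke. Your geometric route is more transparent about where algebraic closedness of $k$ enters, but it needs the reduction step and the extra bookkeeping for reducible spectra.
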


By \cite[Thm. 1.2]{sweedler}, $k$ is algebraically closed in $A$ if and only 
if $A$ is reduced and has no non-trivial idempotent.

\proof
The hard part is exactness on the right, i.e., that every unit
in $A\otimes_kB$ is of the form $a\otimes b$ for units $a,b$
of $A$ and $B$, respectively. This is a theorem of Sweedler
\cite[Thm. 1.2]{sweedler}. The exactness in the middle follows
from the following elementary Lemma.
\proofend

\begin{lemma}
Let $V$ and $W$ be vector spaces over a field $k$. If
$v\otimes w=v'\otimes w'$, then there is $s\in k^\times$
such that $v=sv'$ and $w=s^{-1}w'$.
\end{lemma}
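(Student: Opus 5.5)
The idea is to reduce to the case where both tensors are nonzero and then compare coordinates in a basis. One caveat first: the statement is literally false if some of the tensors vanish. For example, $0\otimes w = v'\otimes 0$ with $v'\neq 0$ admits no such $s$. So we assume $v\otimes w=v'\otimes w'\neq 0$. This is the case needed in Proposition \ref{tensorunits}, since units are nonzero. Under this assumption all four vectors $v,w,v',w'$ are nonzero, because $0\otimes w=0$ and $v\otimes 0=0$.

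First we show $v$ and $v'$ are proportional. Suppose they were linearly independent. Extend $\{v,v'\}$ to a basis of $V$ and use the decomposition
$$V\otimes_k W\cong\bigoplus_{e}e\otimes W$$
over the basis vectors $e$. The $v$-component of $v\otimes w$ is $w\neq 0$. The $v$-component of $v'\otimes w'$ is $0$. This contradicts $v\otimes w=v'\otimes w'$, so $v=sv'$ for some $s\in k$, and $s\neq0$ because $v\neq 0$.

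Next we find the relation between $w$ and $w'$. Substituting gives $v'\otimes(sw)=v'\otimes w'$, that is, $v'\otimes(sw-w')=0$. Since $v'\neq0$, we can choose a linear functional $\lambda\colon V\to k$ with $\lambda(v')=1$. Applying $\lambda\otimes\id\colon V\otimes W\to W$ yields $sw-w'=0$, hence $w=s^{-1}w'$. The plan has no real obstacle; the only point requiring care is the nonvanishing hypothesis, which must be made explicit. It holds in the application because $a\otimes b$ is a unit there.
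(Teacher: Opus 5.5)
Your proof is correct and is essentially the paper's argument: both evaluate the identity $v\otimes w=v'\otimes w'$ against coordinates or linear functionals. The paper applies $\varphi\otimes\psi$ symmetrically to get $w=cw'$ and $v=dv'$, then deduces $cd=1$; you get $v=sv'$ from a basis and then read off the scalar directly with $\lambda\otimes\id$. Your caveat is also right: the statement needs $v\otimes w\neq 0$ (e.g.\ $0\otimes w=v'\otimes 0$ with $v'\neq 0$ is a counterexample), so the paper's ``we can assume'' that all four vectors are non-zero is really an extra hypothesis, which is harmless in Proposition~\ref{tensorunits} because there the tensor is $1\otimes 1$.
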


\proof
We can assume that $v,w,v',w'$ are non-zero.
Take a linear map $\varphi:V\to k$ such that $\varphi(v)=1$,
then for any $\psi:W\to k$ we have 
$\varphi\otimes \psi(v\otimes w)=\psi(w)$ and
$\varphi\otimes \psi(v'\otimes w')=\varphi(v')\psi(w')=
\psi(\varphi(v')w')$. In particular,
$\psi(w-\varphi(v')w')=0$ for all $\psi$, so that 
$w=cw'$, and similarly $v=dv'$ for $c,d\in k$.
Then $v\otimes w=cdv'\otimes w'=cdv\otimes w$,
hence $(cd-1)v\otimes w=0$ and $cd=1$.
\proofend

\begin{proposition}
Assume that the finitely generated extension field $L/k$ admits a smooth
and proper model $X$. Then we have a short exact sequence
$$0\to \Pic(X) \to \Pic(X_\K)\to \Pic(L\otimes_k\K)\to 0.$$
\end{proposition}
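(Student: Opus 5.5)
We read $\Pic(L\otimes_k\K)$ as the Picard group of the spectrum of the localization of $X_\K$ described in the Example. By the preceding Lemma, $L\otimes_k\K$ is a regular integral ring. Concretely, $\Spec(L\otimes_k\K)$ is the intersection inside $X_\K$ of the open subschemes $U_\K$, where $U$ runs over the nonempty opens of $X$, since $L=\colim_U k(U)$ and $L\otimes_k\K=\colim_U \mathcal O(U_\K)$ after passing to affine $U$. Because $X_\K$ is regular, $\Pic$ equals the divisor class group both on $X_\K$ and on $L\otimes_k\K$. Moreover the Picard group commutes with this filtered colimit of schemes with affine transition maps, so
$$\Pic(L\otimes_k\K)=\colim_U \Pic(U_\K).$$
For each $U$ with complement $Z=X\setminus U$, the localization sequence for divisor class groups is exact:
$$\bigoplus_{D\subset Z_\K,\ \mathrm{codim}\,1}\Z\,[D]\to \Pic(X_\K)\to \Pic(U_\K)\to 0.$$
Passing to the colimit shows that $\Pic(X_\K)\to\Pic(L\otimes_k\K)$ is surjective. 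Its kernel is generated by the classes of prime divisors $D$ of $X_\K$ contained in some $Z_\K$, with $Z\subsetneq X$ a closed subset defined over $k$.

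The main step is to show that this kernel is exactly the image of $\Pic(X)$. The inclusion ``$\supseteq$'' is clear, because every divisor on $X$ is supported on a proper closed subset defined over $k$. For ``$\subseteq$'', let $D\subset Z_\K$ be a prime divisor of $X_\K$. Then $D$ is an irreducible component of $(Z')_\K$ for some codimension one irreducible component $Z'$ of $Z$. Since $k$ is algebraically closed, $Z'$ is geometrically irreducible, so $(Z')_\K$ is irreducible and therefore $D=(Z')_\K$. Hence $[D]$ is the image of $[Z']\in\Pic(X)$. This gives exactness in the middle.

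It remains to show that $\Pic(X)\to\Pic(X_\K)$ is injective. This follows from Lemma \ref{inject}, since the Corollary after Proposition \ref{mccolim} applies to $H^2_\M(-,\Z(1))=\Pic$. Alternatively, one can use that $\Pic_X$ is a scheme locally of finite type and that $X$ has a $k$-point, which gives $\Pic(X_\K)=\Pic_X(\K)\supseteq\Pic_X(k)=\Pic(X)$.

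We expect the main obstacle to be the justification that $\Pic$, equivalently the class group, commutes with the colimit defining $L\otimes_k\K$, together with the identification of its spectrum with the intersection of the $U_\K$. We would handle this using the Example (\cite[8.13.1]{EGAIV24}), or via Proposition \ref{mccolim}(b) applied to $z^1$, with $Y=X_\K$ and base $S$ a model of $L$. Note that properness of $X$ is used only to guarantee that every divisor on $X_\K$ lying in the boundary comes from $k$; smoothness gives $\Pic=\CH^1$.
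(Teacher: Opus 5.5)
Your argument is correct, but it is organized differently from the paper's.

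\textbf{Shared input.} Both proofs use the same geometric fact: the prime divisors of $X_\K$ that disappear on $\Spec(L\otimes_k\K)$ are exactly the base changes $Z'_\K$ of prime divisors $Z'$ of $X$. This needs only that $k$ is algebraically closed.

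\textbf{The paper's route.} The paper compares the divisor sequence $0\to\K^\times\to Q^\times\to\bigoplus_{X_\K^{(1)}}\Z\to\Pic(X_\K)\to 0$ with the corresponding sequence for $L\otimes_k\K$. From this it gets $0\to\K^\times\to(L\otimes_k\K)^\times\to\bigoplus_{X^{(1)}}\Z\to\Pic(X_\K)\to\Pic(L\otimes_k\K)\to 0$. Injectivity of $\Pic(X)\to\Pic(X_\K)$ then comes from Sweedler's theorem (Proposition \ref{tensorunits}). Since $(L\otimes_k\K)^\times/\K^\times\cong L^\times/k^\times$, the kernel of $\tau$ is exactly the group of principal divisors of $X$.

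\textbf{Your route.} You get surjectivity and exactness in the middle from $\Pic(L\otimes_k\K)=\colim_U\Pic(U_\K)$ and the localization sequences for class groups. You then prove injectivity separately, via Lemma \ref{inject} or the Picard scheme.

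\textbf{Comparison.}
\begin{itemize}
\item Your route avoids Sweedler's theorem, which is the hard input in the paper's argument.
\item Your main line never uses properness. The descent of boundary divisors uses only that $k$ is algebraically closed, and Lemma \ref{inject} needs only smoothness, so $\Pic=H^2_\M(-,\Z(1))$. Your closing remark about where properness enters is therefore inaccurate. Properness is needed only in your alternative Picard-scheme argument. In the paper it is needed to have $\Gamma(X_\K,\mathcal O^\times)=\K^\times$ and $\Gamma(X,\mathcal O^\times)=k^\times$ at the start of the divisor sequences.
\item The paper's route computes the units $(L\otimes_k\K)^\times$ at the same time. That is the other half of the weight-one motivic cohomology of $L\otimes_k\K$, which is what this section is after.
\end{itemize}
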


\proof
Let $Q$ be the quotient field of $L\otimes_k\K$, which 
agrees with the function field of $X_\K$. Then comparing
the short exact divisor sequence of $X_\K$ and $L\otimes_k\K$, 
we obtain the diagram
$$ \begin{CD}
0@>>> \K^\times @>>> Q^\times 
@>>> \bigoplus_{X_\K^{(1)}}\Z@>>> \Pic(X_\K)@>>>0\\
@.@VVV@|@VVV@VVV\\
0@>>> (L\otimes_k\K)^\times @>>> Q^\times 
@>>> \bigoplus_{new}\Z@>>> \Pic(L\otimes_k\K)@>>>0
\end{CD}$$
Here the sum runs over all divisors of $X_\K$ and all
divisors not coming as a base extension from a divisor of $X$,
respectively. A diagram chase then gives the exact sequence
$$ 0\to \K^\times \to (L\otimes_k\K)^\times \to \bigoplus_{X^{(1)}}\Z 
\stackrel{\tau}{\to} \Pic(X_\K)\to \Pic(L\otimes_k\K)\to 0.$$
But by Proposition \ref{tensorunits}, the first three terms 
are quasi-isomorphic to the short exact divisor sequence of $X$
$$ 0\to k^\times \to L^\times \to \bigoplus_{X^{(1)}}\Z$$
with cokernel $\Pic(X)$.
\proofend

\end{document}